\newtheoremstyle{exercise} %for books or class notes
  {3pt} %space above
  {3pt} %space below
  {\small\rmfamily} %body font
  {
} %indent amount(empty=no indent,\parindent=para indent)
  {\rmfamily\scshape} %thm head font
  {.} %punctuation after thm head
  {.5em} %space after thm head: " " = normal interword space;
\newtheoremstyle{newplain}
  {5pt}
  {5pt}
  {\itshape}
  {}
  {\rmfamily\scshape}
  {. ---}
  {.5em}
  {}
\newtheoremstyle{newremark}
  {5pt}
  {5pt}
  {\rmfamily}
  {}
  {\rmfamily\scshape}
  {. ---}
  {.5em}
  {}
\theoremstyle{newplain}
\newtheorem*{Theorem*}{Theorem} %no numbering for Theorem*
\theoremstyle{newplain}
\newtheorem{Theorem}{Theorem}
\newtheorem{Corollary}[Theorem]{Corollary}
\newtheorem{Proposition}[Theorem]{Proposition}
\newtheorem{Conjecture}[Theorem]{Conjecture}
\newtheorem{Definition}[Theorem]{Definition}
\theoremstyle{newremark}
\newtheorem{Empty}[Theorem]{}
\newtheorem{Claim}[Theorem]{Claim}
\theoremstyle{exercise}
\numberwithin{Theorem}{section}
\numberwithin{Exercise}{section}
\theoremstyle{newplain}
\newtheorem*{Theoreme*}{Théorème} %no numbering for Theorem*
\theoremstyle{newplain}
\newtheorem{Theoreme}{Théorème}
\newtheorem{Proposition}[Theoreme]{Proposition}
\theoremstyle{newremark}
\theoremstyle{exercise}
\numberwithin{Theoreme}{section}
\numberwithin{Exercice}{section}
\newcommand{\R}{\mathbb{R}} %real numbers
\newcommand{\Rn}{\R^n}
\newcommand{\Z}{\mathbb{Z}} %integers
\newcommand{\ind}{\mathbbm{1}} %indicatrix function
\newcommand{\R}{\mathbf{R}} %real numbers
\newcommand{\Rn}{\R^n}
\renewcommand{\setminus}{\thicksim} %set theoretic difference à la Federer
\newcommand{\bbL}{\mathbb{L}}
\newcommand{\bbR}{\mathbb{R}}
\newcommand{\bbZ}{\mathbb{Z}}
\newcommand{\calC}{\mathscr{C}}
\newcommand{\calF}{\mathscr{F}}
\newcommand{\calI}{\mathcal I}
\newcommand{\calK}{\mathscr{K}}
\newcommand{\calM}{\mathscr{M}}
\newcommand{\calN}{\mathscr{N}}
\newcommand{\calP}{\mathscr{P}}
\newcommand{\calR}{\mathcal{R}}
\newcommand{\calS}{\mathscr{S}}
\newcommand{\bB}{\mathbf{B}}
\newcommand{\bCH}{\mathbf{CH}} %space of charges
\newcommand{\bF}{\mathbf{F}}
\newcommand{\bH}{\mathbf{H}}
\newcommand{\bI}{\mathbf{I}}
\newcommand{\bM}{\mathbf{M}}
\newcommand{\bN}{\mathbf{N}}
\newcommand{\bU}{\mathbf{U}}
\newcommand{\bZ}{\mathbf{Z}}
\newcommand{\bc}{\mathbf{c}}
\DeclareMathOperator{\rmBdry}{\mathrm{Bdry}} %boundary
\DeclareMathOperator{\rmClos}{\mathrm{Clos}} %closure
\DeclareMathOperator{\rmdim}{\mathrm{dim}} %dimension
\DeclareMathOperator{\rmid}{\mathrm{id}} %the identity map
\DeclareMathOperator{\rmLip}{\mathrm{Lip}} %Lipschitz constant
\DeclareMathOperator{\rmspt}{\mathrm{spt}} %support
\newcommand{\lseg}{\boldsymbol{[}\!\boldsymbol{[}}
\newcommand{\rseg}{\boldsymbol{]}\!\boldsymbol{]}}
\newcommand{\hel} {
\hskip2.5pt{\vrule height7pt width.5pt depth0pt}
\hskip-.2pt\vbox{\hrule height.5pt width7pt depth0pt}
\, }
\def\XXint#1#2#3{{%
\setbox0=\hbox{$#1{#2#3}{\int}$}
\vcenter{\hbox{$#2#3$}}\kern-.5\wd0}}
\newcommand{\veps}{\varepsilon}
\newcommand{\la}{\langle}
\newcommand{\ra}{\rangle}
\renewcommand{\em}{\bf}
\renewcommand{\leq}{\leqslant}
\renewcommand{\geq}{\geqslant}
\renewcommand{\subset}{\subseteq}
\definecolor{garde}{rgb}{0.88,0.88,1}
\renewcommand{\maketitle}{
\begin{titlepage}
%\makebox[0.9\textwidth]{%
\begin{center}
\includegraphics[width=\textwidth]{vol1.png}
\end{center}
%\vspace*{0.01\textheight}\noindent
{\sqrcfamily Thierry De Pauw}\hfill\par
\vspace*{0.01\textheight}
\noindent{\color{garde}\rule{\textwidth}{.2cm}}
%\rule{\textwidth}{1pt}\par
%\vspace{2pt}\vspace{-\baselineskip}
%\rule{\textwidth}{0.4pt}\par
\vspace{0.05\textheight}
\begin{center}
%\textcolor{title}{%\FSfont{5cz}% Chisel
{\sqrcfamily
{\huge Lecture Notes}\\[\baselineskip]
{\huge On Tame Analysis}\\[\baselineskip]% Delphian (5dp)
{\huge And Geometry}}% Mona Lisa
%}
\par
\vspace{0.0125\textheight}
%{\color{garde}\rule{0.45\textwidth}{1.2pt}}\par
%\vspace{0.1\textheight}
%{\sqrcfamily
%{\Large Tome 1}\\[2.5\baselineskip]
%{\Large Théorie de la Mesure et}\\[0.9\baselineskip]
%{\large et}\\[0.9\baselineskip]
%{\Large Analyse Fonctionnelle}}
\vfill
%{\large \textcolor{Red}{\plogo}}\\[0.5\baselineskip]
{\sqrcfamily 2014}\par
%\vspace*{\drop}
\end{center}
%}
%\hspace{4pt}
%{\color{garde}\rule{1cm}{\textheight}}
\end{titlepage}
}
\newlength{\drop}
\begin{document}

%=================
% TITLE AND AUTHOR
%=================

%\titleAT %see above / page de garde d'un livre

\title[Linear isoperimetric inequality]{Linear isoperimetric inequality\\for normal and integral currents\\in compact subanalytic sets}

\def\curraddrname{{\itshape On leave of absence from}}

\author[Th. De Pauw]{Thierry De Pauw}
\address{School of Mathematical Sciences\\
Shanghai Key Laboratory of PMMP\\ 
East China Normal University\\
500 Dongchuang Road\\
Shanghai 200062\\
P.R. of China\\
and NYU-ECNU Institute of Mathematical Sciences at NYU Shanghai\\
3663 Zhongshan Road North\\
Shanghai 200062\\
China}
\curraddr{Universit\'e Paris Diderot\\ 
Sorbonne Universit\'e\\
CNRS\\ 
Institut de Math\'ematiques de Jussieu -- Paris Rive Gauche, IMJ-PRG\\
F-75013, Paris\\
France}
\email{thdepauw@math.ecnu.edu.cn,thierry.de-pauw@imj-prg.fr}

\author[R. M. Hardt]{Robert M. Hardt}
\address{Department of Mathematics\\
Rice University\\
P. O. Box 1892\\
Houston, Texas 77251}
\email{hardt@rice.edu}

\keywords{Isoperimetric inequality, normal and integral currents, subanalytic geometry, Plateau problem}

\subjclass[2010]{Primary 49Q15,49Q20,32B20,14P10; Secondary 52A40,49J45}

\thanks{The first author was partially supported by the Science and Technology Commission of Shanghai (No. 18dz2271000).
The second author was partially supported by National Science Foundation Grant DMS-1207702.}

%\date{December 9th, 2018}

%=========
% ABSTRACT
%=========

\begin{abstract}
The isoperimetric inequality for a smooth compact Riemannian manifold $A$ provides a positive  ${\bf c}(A)$, so that for any $k+1$ dimensional integral current $S_0$ in $A$  there exists an  integral current $ S$ in $A$ with $\partial S=\partial S_0$ and $\ {\bM}(S)\leq {\bc}(A){\bM}(\partial S)^{(k+1)/k}$. Although such an inequality still holds for any compact Lipschitz neighborhood retract $A$, it may fail in case $A$ contains a single polynomial singularity.  Here, replacing $(k+1)/k$ by $1$, we  find that a linear inequality   ${\bM}(S)\leq {\bc}(A){\bM}(\partial S)$ is valid for any compact algebraic, semi-algebraic, or even subanalytic set $A$. In such a set, this linear inequality holds not only for integral currents, which have  $\Z$ coefficients, but also for normal currents having $\R$ coefficients and generally for normal flat chains with coefficients in any complete normed abelian group.  A relative version for a subanalytic pair $B\subset A$ is also true, and th
 ere are applications to variational and metric properties of subanalytic sets.

\end{abstract}

\maketitle

%===========
% DEDICATION
%===========

%\begin{comment}
%\begin{flushright}
%\textsf{\textit{Dedicated to: Joseph Plateau, Arno, Hooverphonic, Hercule Poirot,\\
%Kim Clijsters, Pierre Rapsat, and non Jef t'es pas tout seul}}
%\end{flushright}
%\end{comment}

%==================
% TABLE OF CONTENTS
%==================

\tableofcontents
%\newpage

%==============================
% THE MEAT --- OR SO ONE THINKS
%==============================

\section{Introduction}

Assume that $A$ is a smooth compact Riemannian manifold and that $k$ is a positive integer. The following hold.
\begin{enumerate}
\item[(A)] The singular homology group $H_k(A;\Z)$ of $A$ with integer coefficients and the homology group $\bH_k(A;\Z)$ of $A$ defined by means of integral currents are isomorphic.
\item[(B)] If an integral current $T \in \bI_k(A)$ equals $\partial S_0$ for some $S_0 \in \bI_{k+1}(A)$,  then there exists an $S \in \bI_{k+1}(A)$ such that $\partial S= T$ and $\bM(S) \leq \bc(A) \bM(T)^{(k+1)/k}$.
\item[(C)] Each homology class in $\bH_k(A;\Z)$ admits a mass minimizing representative, i.e. given $T_0 \in \bI_k(A)$ with $\partial T_0 = 0$, the following variational problem admits a minimizer:
\begin{equation*}
(\calP) \begin{cases}
\text{minimize } \bM(T)\\
\text{among } T \in \bI_k(A) \text{ with } T - T_0 = \partial S \text{ for some } S \in \bI_{k+1}(A).
\end{cases}
\end{equation*}
\end{enumerate}
\par 
These have been established by \textsc{H. Federer and W.H. Fleming}, \cite[6.3]{FED.FLE.60}. Note that a smooth isometric embedding $A \subset \Rn$ of the manifold into some Euclidean space exhibits $A$ as a Lipschitz neighborhood retract. This means there exists an open neighborhood $U$ of $A$ in $\Rn$ and a Lipschitzian retraction $f : U \to A$ onto $A$ (one can choose $f$ to have the same class of smoothness as $A$). It ensues that $\bH_k(A;\Z)$ and $\bH_k(U;\Z)$ are isomorphic, and conclusion (A) now follows from the
 the deformation theorem,  \cite[4.4.2]{GMT} which shows that each integral cycle $T \in \bI_k(U)$ is homologous to some polyhedral cycle $P \in \bI_k(U)$, of comparable mass, coming from a fixed complex.  Conclusion (B) also follows from a careful application of this deformation theorem. In order to establish (C), one considers a mass minimizing sequence $\la T_j \ra_j$ for $(\calP)$. According to (B) there are $S_j \in \bI_{k+1}(A)$ such that $\partial S_j = T_j - T_0$ and
\begin{equation*}
\bM(S_j) \leq \bc(n) \bM(T_j-T_0)^{(k+1)/k} \leq \bc\left(n,\bM(T_0)\right) 
\end{equation*}
if $j$ is large enough. Referring to the compactness theorem of integral currents, corresponding subsequences of $\la T_j \ra_j$ and $\la S_j \ra_j$ converge  in flat norm to, respectively $T \in \bI_k(A)$ and $S \in \bI_{k+1}(A)$ such that $\partial S = T - T_0$\ , thus $T$ and $T_0$ are homologous in $A$. As $\bM$ is lower semicontinuous with respect to convergence in the flat norm, $T$ minimizes mass in its homology class. 
\par 
In this paper we study these questions with $A$  being a compact subanalytic subset of $\Rn$, and $\bI_k(A) = \bI_k(\Rn) \cap \{ T : \rmspt T \subset A \}$. Whereas {\bf semi-algebraic sets} \cite{BOCHNAK.COSTE.ROY} are defined by finitely many polynomials, the larger class of {\bf subanalytic sets} \cite{BIE.MIL.88} includes sets defined locally by real analytic functions as well as their images under proper real analytic maps.

Such sets may fail to be Lipschitz neighborhood retracts, the methods of \cite{FED.FLE.60} do not apply, and the isoperimetric inequality of (B) may in fact fail: 

 For $N=2,3,\ldots$, we define the semi-algebraic set
\begin{equation*}
A_N = \R^3 \cap \left\{ (x,y,z) : z^{2N} = x^2 + y^2 \text{ and } 0 \leq z \leq 1 \right\},
\end{equation*}
i.e. $A_N$ is obtained from the rotation around the $z$ axis of the graph of $z = x^{1/N}$, $0 \leq x \leq 1$; it has a cusp at the origin. Given $0 < h < 1$ we consider $T_h \in \bI_1(A_N)$ an oriented circle of multiplicity 1 on $A_N$, at height $h$. It is not hard to show that there exists a unique $S_h \in \bI_2(A_N)$ such that $\partial S_h = T_h$, and
\begin{equation*}
\bM(S_h) = 2\pi \int_0^h z^N \sqrt{1 + (N z^{N-1})^2}dz \,.
\end{equation*}
Since $\bM(T_h) = 2\pi r = 2 \pi h^N$, where $r$ is the radius of the circle $A_N \cap \{ z = h \}$, we infer the following. For each $q > 1$, choosing $N$ so large that $q = \frac{N+1}{N} + \veps$ for some $\veps > 0$,
\begin{equation*}
\lim_{h \to 0^+} \frac{\bM(S_h)}{\bM(T_h)^q} = \lim_{h \to 0^+} \frac{2\pi \left( \frac{h^{N+1}}{N+1}\right)}{\left( 2\pi h^N \right)^q} = \lim_{h \to 0^+} \frac{(2\pi)^{1-q}}{N+1}\left( \frac{1}{h}\right)^{N\veps} = \infty \,.
\end{equation*}
This shows that in conclusion (B) above, we cannot hope for an inequality $\bM(S) \leq \bc(A) \bM(T)^q$ with an exponent $q > 1$ depending only on the dimension of $T$ and the constant $\bc(A)$ depending only on the semialgebraic set $A$. In fact if $q$ is allowed to depend only on the dimension of $T$, then $q=1$ is the only possible choice, as illustrated by these simple calculations.  Incidentally the computations also show that $A_N$ is not a Lipschitz neighborhood retract: If there were a Lipschitzian retraction $f : U \to A_N$, considering $D_h \in \bI_2(\R^3)$ the unique flat disk in $\R^3$ with $\partial D_h = T_h$, and $h$ small enough for $\rmspt D_h \subset U$, we would have $f_\#D_h = S_h$ and in turn
\begin{equation*}
\frac{2 \pi h^{N+1}}{N+1} \leq \bM(S_h) = \bM(f_\#D_h) \leq \pi(\rmLip f)^2 h^{2N},
\end{equation*}
a contradiction as $h \to 0^+$.
\par 
Our main result is as follows.
\begin{Theorem*}
Let $A \subset \Rn$ be a compact subanalytic set. There exists $\bc(A) > 0$ with the following property. For every $k=0,1,2,\ldots$ and every $S_0 \in \bI_{k+1}(A)$, there exists $S \in \bI_{k+1}(A)$ such that $\ \partial S=\partial S_0\ $ and $\ \bM(S) \leq \bc(A) \bM(\partial S)$. 
\end{Theorem*}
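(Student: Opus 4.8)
The plan is to prove the linear isoperimetric inequality by induction on $\dim A$, using a stratification of $A$ by subanalytic submanifolds together with the conical structure of subanalytic sets near their strata. The base case $\dim A = 0$ is trivial. For the inductive step, one exploits the fact that a compact subanalytic set admits a subanalytic triangulation (or a Whitney stratification with good retraction properties), so that near each point $a$ in a stratum of dimension $j < \dim A$, the set $A$ is subanalytically bi-Lipschitz to a cone, or more precisely there is a subanalytic Lipschitz deformation pushing currents off the low-dimensional skeleton. The key quantitative device should be a \emph{subanalytic cone construction}: given a point $a$ and a small radius $r$ with $A \cap \partial \bB(a,r)$ having controlled geometry, one writes a current $S_0$ supported near $a$ as a cone $a \times\!\!\!\times (S_0 \llcorner \partial \bB(a,r))$ plus lower-order terms, and the cone over a $k$-current of mass $m$ at scale $r$ has mass $\leq \frac{r}{k+1} m$; crucially the \emph{extra boundary} introduced has mass comparable to $\bM(\partial S_0)$ (not to $\bM(S_0)$), which is what converts the Euclidean $(k+1)/k$ exponent into a linear one at the expense of a constant depending on $A$.

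The steps, in order, would be: (1) Reduce to currents of small mass or small support via a covering/compactness argument — it suffices to produce, for each $a \in A$, a radius $r_a > 0$ and a constant $\bc_a$ so that any $S_0$ with $\rmspt S_0 \subset \bB(a,r_a) \cap A$ admits a fill $S$ with $\partial S = \partial S_0$, $\rmspt S \subset A$, and $\bM(S) \leq \bc_a \bM(\partial S_0)$; then patch finitely many such local fills together, controlling the overlaps. (2) Establish the local statement by induction on $\dim A$: stratify $\bB(a,r_a)\cap A$, use a subanalytic retraction (from the local conical structure, cf. the local triangulation theorem for subanalytic sets) to deform $S_0$ onto a lower-dimensional subanalytic set $A'$ while adding a controlled amount of mass and boundary, apply the inductive hypothesis on $A'$, then cone back. (3) Assemble the global constant $\bc(A)$ from the finitely many local constants and the patching cost. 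A separate routine verification handles $k+1 > \dim A$ (then $S_0 = 0$ is forced, or $\partial S_0 = 0$ and $S = 0$ works) and $k = 0$.

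The main obstacle, and the heart of the paper, is step (2): making the deformation-onto-a-skeleton quantitative in the subanalytic category. In the smooth/Lipschitz-retract case one has a genuine Lipschitz retraction and the deformation theorem does the job; here $A$ need not retract onto anything, and the cusp examples $A_N$ show the naive Euclidean estimates blow up. One must instead use the \emph{local conical structure} of subanalytic sets — every point has a neighborhood subanalytically homeomorphic to a cone over the link — but homeomorphisms are not enough for mass estimates, so one needs either a bi-Lipschitz version (which fails in general, again by the cusp) or, more likely, a \emph{Lipschitz deformation retraction valid only in the flat/mass sense for currents}, constructed stratum by stratum, where each elementary step trades a factor of $r$ (the local radius, bounded since $A$ is compact) for moving one dimension down. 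Controlling the accumulated constant through all $\dim A$ strata, and ensuring the deformation is itself subanalytic so that supports stay inside $A$, is where the real work lies; the integral-versus-normal-versus-flat-chain generality should then come essentially for free since the cone inequality and the deformation estimates are coefficient-group agnostic.
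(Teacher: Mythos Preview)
Your overall architecture --- induction on $\dim A$, finite cover, local fills, then patching --- is the paper's too, and you correctly name the crux: obtaining Lipschitz-quality local contractions when $A$ admits no Lipschitz neighborhood retraction. But your proposal leaves this crux unresolved (you only speculate about a deformation ``valid only in the flat/mass sense, constructed stratum by stratum''), and the route you actually suggest --- deform $S_0$ onto a lower-dimensional skeleton $A'$ and induct there --- runs into exactly the obstruction the cusp examples exhibit: there is in general no Lipschitz retraction of $A$ onto any such $A'$. The paper's resolution is different and rests on a theorem of G.~Valette: every point $a\in A$ has a compact subanalytic neighborhood $K\subset A$ with a genuine \emph{Lipschitz} deformation contraction $h:[0,1]\times K\to K$ onto $\{a\}$. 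This gives at once, for any \emph{cycle} $J\in\bI_k(K)$, a fill $H=-h_\#(\lseg 0,1\rseg\times J)$ with $\bM(H)\le(\rmLip h)^{k+1}\bM(J)$. The induction on $\dim A$ is then applied not to a skeleton of a triangulation but to the spherical \emph{links} $A\cap\rmBdry\bU(a_i,\lambda_i r_i)$: one slices there, replaces the slice $P_i$ by a mass-controlled chain $Q_i$ via the inductive hypothesis on the lower-dimensional link, assembles $(\partial S_0)\hel W_i^{\vec\lambda}$ together with $Q_i$ and earlier corrections into a cycle $J_i$, and contracts $J_i$ via $h_i$ to produce $H_i$.

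There is a second ingredient your proposal misses entirely. The radii $\lambda_i$ at which one slices must be chosen so that the slices are integral with controlled mass, and such ``good'' radii depend on $S_0$; hence the links on which one invokes the inductive hypothesis depend on $S_0$, and so a priori do their isoperimetric constants. To obtain $\bc(A)$ independent of $S_0$ the paper brings in Parusi\'nski's locally Lipschitz trivial stratification to prove that, for $\vec\lambda$ ranging over a fixed small rectangle $R_{\vec r}^{\vec a}\subset[\frac12,1]^\ell$, the link unions $\bigcup_i\bigl(A\cap\rmBdry\bU(a_i,\lambda_i r_i)\bigr)$ are all \emph{uniformly bilipschitz equivalent}; one inductive constant then serves for every admissible $\vec\lambda$. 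The patching you dismiss as ``controlling the overlaps'' is correspondingly more delicate than a partition of unity: the balls are processed in a fixed order, and at step $i$ the boundary of the running current $S_{i-1}=S_0-\sum_{h<i}H_h$ is pushed entirely out of $\bigcup_{h\le i}U_h^{\lambda_h}$, while correction terms $Q_{h,i}=Q_h\hel\Gamma_{h,i}^{\vec\lambda}$ supported on the spherical interfaces are carried forward and shown to cancel telescopically.
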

\par 
This is known as a {\it linear} isoperimetric inequality because of the absence of an exponent in the righthand term. Moreover, the linear inequality holds, via the same proof, with $\Z$ replaced by any normed, complete Abelian group $G$ of coefficients. When $G=\R$, with the absolute value norm, the fact that both $\bM(\lambda S)=\lambda \bM(S)$ and  $\bM(\lambda \partial S)=\lambda \bM(\partial S)$ for arbitrarily small positive $\lambda$ shows that both sides of a valid isoperimetric inequality must be homogeneous of the same degree, which can therefore be set equal to 1.
\par 
Our proof is based on two facts of metric nature, regarding a compact subanalytic set $A$. A basic topological property is that $A$ is triangulable \cite{HAR.76}, \cite{HIR.75}, i.e. there exists a simplicial complex ${\calK}$ and a subanalytic homeomorphism $\phi : |{\calK}| \to A$. For $x \in A$, there exists $r(x) > 0$ such that the intersections with open balls  $|{\calK}| \cap \bU(\phi^{-1}(x),r)$ are contractible for $0 < r \leq r(x)$, and it ensues that $A$ is locally contractible. Here we will use a strengthening due to \textsc{G. Valette}, \cite{VAL.12} stating that the local homotopy $h :[0,1] \times (A \cap U) \to A \cap U$ from the identity to a constant can be chosen to be Lipschitz. This already implies $A$ is ``locally acyclic'' with respect to the chain complex of integral currents and in turn, that $H_k(A;\Z)$ and $\bH_k(A;\Z)$ are isomorphic, see \cite{DEP.06.HOM}, which is analogous to (A) above.  See also \cite{FUN.16}.  A second consequence of triangulabi
 lity is that if $x \in A$ and $0 < s \leq s(x)$ is small enough, then, for  $s/2 < t < t' < s$, the spherical links $A \cap \rmBdry \bU(x,t)$ and $A \cap \rmBdry \bU(x,t')$ are subanalytically  homeomorphic.  Here we use the work of \textsc{A. Parusi\'nski}, \cite{PAR.94} on locally Lipschitz stratification of subanalytic sets to find, away from the origin,  intervals of radii for links which are uniformly bilipschitz equivalent.
\par 
After our Main Theorem and the analog of (B) above are established, the analog of (C) follows along the same lines sketched at the beginning of this introduction. It suffices to observe that for proving (B), the particular power $(k+1)/k$ plays no significant role. An existence theorem for the Plateau problem in semialgebraic sets has been obtained recently by \textsc{Q. Funk} using different methods, \cite{FUN.16}.
\par
As discussed in \S 5 , the linear isoperimetric inequality for compact subanalytic sets is also valid for normal currents with $\bbR$ coefficients or even normal flat chains with coefficients in any complete normed abelian group. It plays a role in relating not only various homology theories but also a duality between  a homology based on normal currents and a cohomology based on normal cochains called charges \cite{DEP.HAR.PFE.17}.
\par
 A large number of geometric variational problems involving support constraints, boundary constraints, or free boundaries can be formulated with various groups of chains, cycles, boundaries, or homologies. Generalizing (C), we give in \S \ref{59}, the existence theory for one problem related to minimizing mass in a relative homology class of a pair  $B\subset A$ of compact subanalytic sets. This uses  a relative isoperimetric inequality which bounds the the part of the mass in $A\setminus B$ of a suitable chain in terms of the part of the mass in $A\setminus B$ of its boundary. There remain many interesting regularity questions concerning solutions of this and other variational problems in semi-algebraic and subanalytic sets.
\par 
 Interesting inequalities concerning functions defined on  singular algebraic varieties or subanalytic domains are found in works of {\sc L. Bos} and {\sc P. Milman} \cite{BOS.MIL.95}, \cite{BOS.MIL.06} and {\sc A.Valette} and {\sc G. Valette} \cite{VAL.VAL.20} .
\par
Our notations regarding integral and normal currents, and flat chains are consistent with that of \cite{GMT}.

\section{Bilipschitz Equivalence}
\begin{Empty}[Linear isoperimetric inequality]
Let $k=0,1,2,\ldots$ and $A \subset \Rn$. We say that $A$ {\em satisfies the linear isoperimetric inequality of dimension $k$} whenever the following holds. There exists $\bc(A,k) > 0$ such that for each $S_0 \in \bI_{k+1}(A)$ there exists $S \in \bI_{k+1}(A)$ with $\partial S=\partial S_0$ and $\bM(S) \leq \bc(A,k) \bM(\partial S)$.
\end{Empty}

\begin{Empty}[Bilipschitz equivalence]
\label{32}
Let $X , Y \subset \Rn$ and $\bbL > 0$. 
We say that a bijective map $\phi : X \to Y$ is {\em $\bbL$-bilipschitz} if $\phi$ is Lipschitz, $\phi^{-1}$ is Lipschitz and $\max \{ \rmLip \phi , \rmLip \phi^{-1} \} \leq \bbL$.
In case such map $\phi$ exists, we say that $X$ and $Y$ are {\em bilipschitz equivalent}.
\end{Empty}

\begin{Proposition}
\label{33}
Let $k=0,1,2,\ldots$, and $X,Y \subset \Rn$.
Assume that $X$ and $Y$ are bilipschitz equivalent.
It follows that $X$ satisfies the linear isoperimetric inequality of dimension $k$ if and only if $Y$ does.
\end{Proposition}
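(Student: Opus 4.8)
The plan is to transport a competitor chain through the bilipschitz homeomorphism and use the functoriality of push-forward under Lipschitz maps. By symmetry of the hypothesis, it suffices to prove one implication: assume $X$ satisfies the linear isoperimetric inequality of dimension $k$ with constant $\bc(X,k)$, fix an $\bbL$-bilipschitz bijection $\phi : X \to Y$, and show $Y$ satisfies it with a constant depending only on $\bc(X,k)$, $\bbL$, and $k$. First I would recall the standard fact that if $f : P \to Q$ is a Lipschitz map between subsets of Euclidean spaces (or, more precisely, a map admitting a Lipschitz extension to a neighborhood, which is automatic here since $X, Y \subset \Rn$ and Lipschitz maps on arbitrary subsets of $\Rn$ extend to $\Rn$ by Kirszbraun or by coordinatewise McShane extension), then $f_\#$ carries $\bI_{k+1}(P)$ into $\bI_{k+1}(Q)$, commutes with the boundary operator, $f_\#(\partial S) = \partial(f_\# S)$, and satisfies the mass bound $\bM(f_\# S) \leq (\rmLip f)^{k+1}\bM(S)$ on $(k+1)$-dimensional currents and $\bM(f_\#\partial S) \leq (\rmLip f)^{k}\bM(\partial S)$ on the $k$-dimensional boundary. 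I would also note that $\rmspt(f_\# S) \subset f(\rmspt S)$, so push-forward respects the support constraints defining $\bI_{k+1}(X)$ and $\bI_{k+1}(Y)$.

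The core argument is then a short diagram chase. Given $S_0 \in \bI_{k+1}(Y)$, set $S_0' := (\phi^{-1})_\# S_0 \in \bI_{k+1}(X)$; this is legitimate because $\phi^{-1} : Y \to X$ is Lipschitz and $\rmspt S_0' \subset \phi^{-1}(\rmspt S_0) \subset X$. Apply the hypothesis to $S_0'$ to obtain $S' \in \bI_{k+1}(X)$ with $\partial S' = \partial S_0'$ and $\bM(S') \leq \bc(X,k)\,\bM(\partial S')$. Now push forward back: set $S := \phi_\# S' \in \bI_{k+1}(Y)$, with $\rmspt S \subset \phi(\rmspt S') \subset Y$. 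Functoriality of push-forward under composition together with $\phi \circ \phi^{-1} = \rmid_Y$ gives $(\phi)_\#(\phi^{-1})_\# = \rmid$ on currents supported in $Y$, hence
\[
\partial S = \partial(\phi_\# S') = \phi_\#(\partial S') = \phi_\#(\partial S_0') = \phi_\#\partial\big((\phi^{-1})_\# S_0\big) = \phi_\#(\phi^{-1})_\#(\partial S_0) = \partial S_0 \,.
\]
For the mass estimate, combine the two push-forward bounds with $\rmLip \phi \leq \bbL$ and $\rmLip \phi^{-1} \leq \bbL$:
\[
\bM(S) = \bM(\phi_\# S') \leq \bbL^{k+1}\bM(S') \leq \bbL^{k+1}\bc(X,k)\,\bM(\partial S') \leq \bbL^{k+1}\bc(X,k)\,\bbL^{k}\,\bM(\partial S_0) = \bbL^{2k+1}\bc(X,k)\,\bM(\partial S_0) \,,
\]
and since $\partial S = \partial S_0$ this is exactly $\bM(S) \leq \bc(Y,k)\,\bM(\partial S)$ with $\bc(Y,k) := \bbL^{2k+1}\bc(X,k)$. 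The reverse implication follows by swapping the roles of $X$ and $Y$ and using $\phi^{-1}$ in place of $\phi$.

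The only genuine technical point — and thus the ``main obstacle,'' though it is a mild one — is justifying that $\phi$ and $\phi^{-1}$, which are only defined on the subsets $X$ and $Y$, induce well-defined push-forward operators on integral currents with the stated mass-scaling and boundary-commutation properties. The clean way to handle this is to extend $\phi$ to a Lipschitz map $\widetilde\phi : \Rn \to \Rn$ with $\rmLip \widetilde\phi = \rmLip \phi$ (McShane/Kirszbraun), observe that for $S \in \bI_{k+1}(\Rn)$ with $\rmspt S \subset X$ the current $\widetilde\phi_\# S$ depends only on $\phi|_X$ and is supported in $\phi(X) = Y$, and invoke the homotopy-formula / Lipschitz-invariance properties of push-forward from \cite{GMT} (the functoriality $(g\circ f)_\# = g_\# f_\#$, the identity $\partial f_\# = f_\# \partial$, and the mass bound $\bM(f_\# T) \leq (\rmLip f)^{\dim T}\bM(T)$). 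One should also remark that the definition $\bI_{k+1}(A) = \bI_{k+1}(\Rn)\cap\{T : \rmspt T \subset A\}$ used in the paper makes the support bookkeeping immediate. With these facts in hand the proof is entirely formal; no subanalyticity or compactness is used here, and the result holds for arbitrary subsets of Euclidean space.
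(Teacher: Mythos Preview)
Your proof is correct and follows essentially the same approach as the paper: extend $\phi$ and $\phi^{-1}$ to Lipschitz maps on $\Rn$ (the paper cites Kirszbraun explicitly), push $S_0$ to $X$, apply the hypothesis, push back, and combine the mass bounds to obtain the same constant $\bbL^{2k+1}\bc(X,k)$. Your verification of $\partial S = \partial S_0$ via functoriality $(\phi)_\#(\phi^{-1})_\# = \rmid$ on currents supported in $Y$ is exactly the mechanism the paper uses (invoking \cite[4.1.15]{GMT} for $f\circ g = \rmid_{\Rn}$ on the support).
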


\begin{proof}
Let $\phi : X \to Y$ be an $\bbL$-bilipschitz homeomorphism. 
One infers from Kirszbraun's Theorem \cite[2.10.43]{GMT} that $\phi, \phi^{-1}$ admit extensions $f, g : \Rn \to \Rn$ with $\max \{ \rmLip f , \rmLip g \} \leq \bbL$.
Assume $X$ satisfies the linear isoperimetric inequality of dimension $k$ with constant $\bc > 0$.
Let $S_0 \in \bI_{k+1}(Y)$.
Define $S_0' = g_\#S_0 \in \bI_{k+1}(\Rn)$.
Notice that $\rmspt(S_0') \subset X$, \cite[4.1.14 p. 371]{GMT}.
Thus there exists $S' \in \bI_{k+1}(X)$ such that $\partial S'=\partial S'_0$ and $\bM(S') \leq \bc \bM(\partial S')$.
Define $S = f_\#S'$ and notice as before that $S \in \bI_{k+1}(Y)$.
Further note that $f_\# g_\# S = (f \circ g)_\# S = S$ where the last equality follows from \cite[4.1.15 p. 372]{GMT} and the fact that $f \circ g = \rmid_{\Rn}$ on $\rmspt S$.
Finally,
\begin{multline*}
\bM(S) = \bM(f_\#g_\#S) \leq (\rmLip f)^{k+1} \bM(S') \leq (\rmLip f)^{k+1} \bc \bM(\partial S')\\ = (\rmLip f)^{k+1} \bc \bM(g_\# \partial S)  \leq (\rmLip f)^{k+1} (\rmLip g)^{k} \bc \bM(\partial S) \leq \bc {\bbL}^{2k+1} \bM(\partial S).
\end{multline*}
\end{proof}

\section{Two Properties of Subanalytic Sets}

A finitely triangulated compact space is trivially locally contractible at each point. 
For a subanalytic set $A$, this local contraction may be chosen to be {\it Lipschitz} according to the following result of \textsc{G. Valette}, \cite[Theorem 2.3.1]{VAL.12} even though $A$ itself may fail to be a Lipschitz neighborhood retract.

\begin{Theorem}
\label{thm.1}
Any point $a$ in a closed subanalytic subset $A$ of $\Rn$ has a compact subanalytic neighborhood $K \subset A$ and a Lipschitz deformation contraction $h:[0,1] \times K\to K$ so that 
$h(0,x)=x$ and $h(1,x)=a$ for $x\in K$ and $h(t,a)=a$ for $t\in [0,1]$.
\end{Theorem}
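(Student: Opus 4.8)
The plan is to reduce to a metrically tame local model of $A$ at $a$ and then contract the model radially, choosing the time‑rescaling so that it matches the cusp exponents of the model; this makes every time‑slice a genuine nonexpanding map. After translating, assume $a=0$; since the statement is local, replace $A$ by the compact subanalytic neighborhood $K=A\cap\overline{\bU}(0,r_0)$, with $r_0>0$ a subanalytic radius fixed below. It then suffices to build a subanalytic $h:[0,1]\times K\to K$ with $h(0,\cdot)=\rmid_K$, $h(1,\cdot)\equiv 0$, $h(s,0)=0$ for all $s$, and $\rmLip h<\infty$; the fixed‑point condition will be automatic. Note that a single cone/link parametrization of $A$ near $a$ is typically not bi‑Lipschitz --- witness the cusps $A_N$ of the Introduction --- so one cannot simply pull back the cone contraction; one must work cell by cell with monomial models.

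The engine is a metrically tame decomposition. From the theory of Lipschitz stratifications of subanalytic sets (Mostowski, Parusi\'nski), or equivalently from a Lipschitz triangulation in the sense of Valette, one extracts the following data: a radius $r_0$, a finite decomposition $K=\bigsqcup_i C_i$ into locally closed subanalytic cells forming a cell complex (each $\overline{C_i}$ a union of cells), and subanalytic homeomorphisms $\theta_i:\overline{C_i}\to\Sigma_i$ onto ``standard cusp cells''
\[
\Sigma_i=\bigl\{(t,y)\in[0,r_0]\times\R^{d_i}\,:\,y\in t^{\alpha_i}\Delta_i\bigr\},
\]
where $\Delta_i$ is a compact polytope, $\alpha_i\geq 1$ is rational, the coordinate $t$ corresponds to $|x|$, all $\theta_i$ are bi‑Lipschitz with one common constant $\bbL_0$, and the $\theta_i$ are compatible along common faces, with matching exponents on the non‑central faces. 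Producing this model --- simultaneously a single dominant cusp rate per cell after a sufficiently fine subdivision, uniform bi‑Lipschitz constants, and exponent compatibility along every face --- is the substantial point, and is where I expect the main difficulty to lie; it is precisely where subanalyticity is used in an essential way.

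Granting the model, put on each $\Sigma_i$ the radial scaling $H_i(s,(t,y))=\bigl((1-s)t,\,(1-s)^{\alpha_i}y\bigr)$. Since $(1-s)^{\alpha_i}t^{\alpha_i}\Delta_i=\bigl((1-s)t\bigr)^{\alpha_i}\Delta_i$, each $H_i(s,\cdot)$ carries $\Sigma_i$ into itself, fixes the cone point $(0,0)$, equals the identity at $s=0$, and is constant $(0,0)$ at $s=1$. Transporting, $h_i:=\theta_i^{-1}\circ H_i\circ(\rmid\times\theta_i)$ contracts $\overline{C_i}$ onto $0=\theta_i^{-1}(0,0)$. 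Because the $\theta_i$ agree on common faces and there the maps $H_i$ induce the same scaling (the exponent being shared on a non‑central face, while the central scaling $t\mapsto(1-s)t$ is exponent‑free), the $h_i$ patch together to a continuous $h:[0,1]\times K\to K$; it is subanalytic, being assembled from finitely many subanalytic pieces, and satisfies the required endpoint and fixed‑point conditions.

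It remains to bound $\rmLip h$. Absorbing the factor $\bbL_0^{2}$ coming from the charts $\theta_i$, it suffices to estimate the $H_i$. For each fixed $s$, the map $(t,y)\mapsto\bigl((1-s)t,(1-s)^{\alpha_i}y\bigr)$ is linear and diagonal with all scaling factors in $[0,1]$, hence $1$‑Lipschitz; and $\partial_sH_i=\bigl(-t,-\alpha_i(1-s)^{\alpha_i-1}y\bigr)$ has norm at most $\bigl(r_0^{2}+\alpha_i^{2}r_0^{2\alpha_i}(\rmdiam\Delta_i)^{2}\bigr)^{1/2}$, using $\alpha_i\geq 1$, which depends only on $r_0$ and the finitely many pairs $(\alpha_i,\Delta_i)$. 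Combining, $h$ is Lipschitz with a constant of the form $\bbL_0^{2}\,\bc(A)$ depending only on $A$ (through $r_0$ and the finite cell data), which proves the theorem. Every step above, apart from the construction of the tame model, is routine.
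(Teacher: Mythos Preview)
The paper does not supply its own proof of this theorem: it is quoted verbatim as a result of \textsc{G. Valette}, \cite[Theorem 2.3.1]{VAL.12}, and no argument is given in the paper beyond the attribution. So there is nothing to compare your proposal against on the paper's side, except the reference itself.

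Your outline is in the spirit of Valette's approach --- reduce to finitely many model cells via a Lipschitz cell decomposition, then contract each model explicitly and patch --- and you correctly flag the construction of the ``metrically tame model'' as the substantial point where subanalyticity enters. Two remarks on your sketch. First, your standard cell $\Sigma_i=\{(t,y):y\in t^{\alpha_i}\Delta_i\}$ with a \emph{single} exponent $\alpha_i$ is a stylized simplification; the actual standard simplices in Valette's Lipschitz triangulation allow different exponents in different coordinate directions, and the compatibility of the contractions $H_i$ along shared faces (your claim that ``the exponent is shared on a non-central face'') is precisely the place where that extra complexity has to be handled carefully --- it is not automatic from what you have written. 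Second, granted the model, your contraction $H_i$ and its Lipschitz bound are fine: the slice maps are indeed nonexpanding and the $s$-derivative is bounded since $\alpha_i\geq 1$. So the ``easy part'' of your argument is correct, but essentially all of the content has been pushed into the existence of the decomposition, which is exactly what \cite{VAL.12} proves.
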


From this we obtain at once a local version of the linear isoperimetric inequality.

\begin{Corollary}
\label{cor.1}
Suppose that  $J \in {\bI}_{k}(K)$ and that $\partial J = 0$ in case $k>0$. Then the chain  $H:=-h_\#\left( \lseg 0,1 \rseg \times J \right)\ $ belongs to ${\bI}_{k+1}(K)\ $ and satisfies
$$
 \partial H\ =\  \begin{cases} J & {\rm if}\ \ $k\ >\ 0$\\
                                              J - J(1)\lseg a\rseg  &{\rm if}\ \ k=0 \end{cases}        
  \quad\quad{\rm  and} \quad\quad{\bM}(H) \leq ( \rmLip h)^{k+1} {\bM}(J)\ .
$$
\end{Corollary}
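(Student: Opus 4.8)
The plan is to realize $H$ as the $h_\#$-image of the cylinder over $J$ and to read off all three assertions from the homotopy formula for integral currents; no single step is genuinely difficult, so the work is essentially bookkeeping.

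First I would form the Cartesian product current $C := \lseg 0,1 \rseg \times J$, where $\lseg 0,1 \rseg \in \bI_1(\R)$ denotes integration over the unit interval. Since $\lseg 0,1 \rseg$ and $J$ are integral and compactly supported, $C \in \bI_{k+1}(\R \times \Rn)$ with $\rmspt C \subset [0,1] \times \rmspt J \subset [0,1] \times K$, and the product formula for mass \cite{GMT} gives $\bM(C) = \bM(\lseg 0,1 \rseg)\,\bM(J) = \bM(J)$. As $h : [0,1] \times K \to K$ is Lipschitz and $C$ is compactly supported in $[0,1] \times K$ (one may extend $h$ to a Lipschitz map $\R \times \Rn \to \Rn$ by Kirszbraun's theorem without affecting $h_\# C$), the current $H = -h_\# C$ is a well-defined integral current of dimension $k+1$ with $\rmspt H \subset h(\rmspt C) \subset h([0,1] \times K) = K$ by \cite[4.1.14]{GMT}; hence $H \in \bI_{k+1}(K)$. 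The estimate $\bM(\psi_\# T) \leq (\rmLip \psi)^m \bM(T)$ for an $m$-dimensional current $T$, applied with $\psi = h$, $m = k+1$, $T = C$, and $[0,1] \times K$ carrying the Euclidean metric of $\R \times \Rn$, yields $\bM(H) = \bM(h_\# C) \leq (\rmLip h)^{k+1} \bM(C) = (\rmLip h)^{k+1} \bM(J)$.

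For the boundary I would combine the product boundary formula $\partial C = (\lseg 1 \rseg - \lseg 0 \rseg) \times J - \lseg 0,1 \rseg \times \partial J$ with the commutation $\partial h_\# = h_\# \partial$, obtaining
\[
\partial H \;=\; -\,h_\#\partial C \;=\; -\,h_\#\big((\lseg 1 \rseg - \lseg 0 \rseg) \times J\big) \;+\; h_\#\big(\lseg 0,1 \rseg \times \partial J\big).
\]
The last term vanishes: $\partial J = 0$ by hypothesis when $k > 0$, and $\partial J = 0$ by dimension when $k = 0$. Since $h$ restricts on $\{0\} \times K$ to the identity of $K$, one has $h_\#(\lseg 0 \rseg \times J) = J$; since $h$ restricts on $\{1\} \times K$ to the constant map $x \mapsto a$, the current $h_\#(\lseg 1 \rseg \times J)$ is $0$ when $k > 0$ (a constant map annihilates every form of positive degree) and equals $J(1)\lseg a \rseg$ when $k = 0$. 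Substituting yields $\partial H = J$ for $k > 0$ and $\partial H = J - J(1)\lseg a \rseg$ for $k = 0$, as asserted.

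The main care — rather than a real obstacle — lies in three places: matching the sign conventions of $\partial(\lseg 0,1 \rseg \times J)$ with those of \cite{GMT}, which is precisely why the overall minus sign is built into the definition of $H$; correctly handling the two degenerate endpoint push-forwards $h(0,\cdot) = \rmid_K$ and $h(1,\cdot) \equiv a$, in particular the term $J(1)\lseg a \rseg$ that appears only when $k = 0$; and the support bookkeeping $\rmspt H \subset h([0,1] \times K) = K$ that upgrades membership from $\bI_{k+1}(\Rn)$ to $\bI_{k+1}(K)$.
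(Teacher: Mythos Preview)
Your proof is correct and follows essentially the same route as the paper: form the cylinder $\lseg 0,1\rseg\times J$, apply the product boundary / homotopy formula together with $\partial h_\# = h_\#\partial$, identify the two endpoint push-forwards using $h(0,\cdot)=\rmid_K$ and $h(1,\cdot)\equiv a$, and read off the mass bound from $\bM(\lseg 0,1\rseg\times J)=\bM(J)$ and the Lipschitz push-forward estimate. The only cosmetic differences are that the paper cites \cite[4.1.9, 4.1.20]{GMT} explicitly for the homotopy formula and for the vanishing of $h_\#(\lseg 1\rseg\times J)$ when $k\geq 1$, whereas you invoke the product boundary formula and the ``constant map annihilates positive-degree forms'' reasoning directly.
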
             

\begin{proof}
Clearly $H\in {\calR}_{k+1}(K)$ because $h$ is Lipschitz, and the homotopy formula \cite[4.1.9]{GMT} shows that, for $k>0$,
\begin{align*}
-\partial H\ =\ h_\#\partial ( \lseg 0,1 \rseg\times J )\ 
&= h_\#( \lseg 1 \rseg\times J)  - h_\#( \lseg 0 \rseg \times J) - h_\#( \lseg 0,1 \rseg\times \partial J )\\
&= \quad \quad \; 0\quad\quad  -\quad \quad  J\quad\quad\, -\quad 0\quad  \in {\calR}_{k}(K)\ ,
\end{align*}
Here
$h(0,\ \cdot\ ) = {\rm id}$ and $h_\#( \lseg 1 \rseg\times J)=0\ $,
by \cite[4.1.20]{GMT}, because $\rmspt h_\#( \lseg 1 \rseg\times J) \subset \{a\}$ and $h_\#( \lseg 1 \rseg\times J) \in \bI_{k}(\Rn)$ with $k \geq 1$. In case $k=0$,
\cite[4.1.9]{GMT} shows that
$$
-\partial H\ =\ h_\#( \lseg 1 \rseg\times J)  - h_\#( \lseg 0 \rseg \times J)\ =\  J(1)\lseg a\rseg\ -\ J
$$
because.  $h_\#( \lseg 1 \rseg\times J)(f)=J(f(a))=f(a)J(1)$ for $f\in {\calC}^\infty_c(\Rn)$.
In either case, we easily estimate
$$
\bM(H)\ \leq ( \rmLip h)^{k+1} {\bM}\left(\lseg 0,1 \rseg \times J\right)\ =\ ( \rmLip h)^{k+1} {\bM}(J)\ .
$$ 
\end{proof}

To use this Corollary in the proof of the Main Theorem via local modifications of the given current $S$, one needs to partition $S$ into finitely many small pieces each contained in a Lipschitz contractible piece of $A$. The choice of the partition necessarily depends on the current $S$. Since one still needs the mass bounds in all constructions to be independent of this choice, we first verify some bilipschitz equivalences.

\begin{Empty}[Finite unions of links]
Any $a\in A$ and $r>0$ determine a {\em spherical link} $L_{r}^{a} := A\cap\rmBdry \bU(a,r)$. Also for $\ell$-tuples ${\vec a}=( a_1,\dots ,a_\ell )\in A^\ell\ $ and ${\vec r}=( r_1,\dots ,r_\ell )\in (\R_+^*)^\ell$ we may denote the corresponding union of links  $ L_{\vec r}^{\vec a} := \cup_{i=1}^\ell  L_{r_i}^{a_i}$. 
\end{Empty}

The next result follows from work of \textsc{A. Parusi\'nski}, \cite{PAR.94} on  locally Lipschitz trivial stratification of subanalytic sets. To indicate individual rescalings of the $r_i$, we use, for ${\vec \lambda}=( \lambda_1,\dots ,\lambda_\ell )$ and $\vec r=( r_1,\dots ,r_\ell )$, the notation $\vec\lambda\vec r = (\lambda_1 r_1,\dots ,\lambda_\ell r_\ell)$.

\begin{Theorem}[Link Bilipschitz Equivalence]
\label{thm.2}
Associated to each pair  $\vec a,\ \vec r$ of such $\ell$-tuples are numbers $\frac 12 <\lambda_{\vec r, i}^{\vec a} <\mu_{\vec r, i}^{\vec a} < 1$ so that, for  any $\lambda_i,\mu_i\in [\lambda_{\vec r, i}^{\vec a}, \mu_{\vec r, i}^{\vec a}]$, the two corresponding link unions $ { L}_{{\vec\lambda} {\vec r}}^{\vec a}$ and ${ L}_{{\vec\mu} {\vec r}}^{\vec a}$  are uniformly bilipschitz equivalent 
 by a map sending  ${ L}_{\lambda_i r_i}^{a_i }$ to $ { L}_{ \mu_i r_i}^{a_i}$\quad for $i=1,\dots ,\ell$. 
\end{Theorem}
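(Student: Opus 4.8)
The plan is to view the entire family of link unions $\{L^{\vec a}_{\vec s}\}$, as the tuple of radii $\vec s$ ranges near $\vec r$, as the fibres of a single proper subanalytic map, and then to apply Parusi\'nski's theory of locally Lipschitz trivial stratifications of subanalytic sets to trivialise this map bilipschitzly over a small box of radii, compatibly with the individual links. To set things up, put $Q = \prod_{i=1}^\ell [r_i/2, r_i] \subset \R^\ell$ and, inside $\Rn \times \R^\ell$, consider the compact subanalytic set
$$
Z = \bigcup_{i=1}^\ell Z_i, \qquad Z_i = \bigl\{ (x,\vec s) \in A \times Q : |x - a_i| = s_i \bigr\},
$$
together with the restriction $\pi : Z \to Q$ of the projection $(x,\vec s) \mapsto \vec s$. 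Then $\pi$ is proper and subanalytic, its fibre over $\vec s = (s_1,\dots,s_\ell)$ is $\pi^{-1}(\vec s) = L^{\vec a}_{\vec s} \times \{\vec s\}$, and $Z_i \cap \pi^{-1}(\vec s) = L^{a_i}_{s_i} \times \{\vec s\}$; thus $\pi$ encodes the family of link unions and the subanalytic subsets $Z_1,\dots,Z_\ell$ isolate the individual links inside each fibre.

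Next I would invoke \cite{PAR.94}: choose a Lipschitz stratification of $Z$ adapted to the map $\pi$ and to the finite family $Z_1,\dots,Z_\ell$, and pass to the induced finite subanalytic stratification $Q = \bigsqcup_\alpha Q_\alpha$ of the target. Over each stratum $Q_\alpha$ the map $\pi$ is then a locally bilipschitz trivial fibration respecting the $Z_i$: every $\vec s_0 \in Q_\alpha$ has a neighbourhood $N$ in $Q_\alpha$ and an $\bbL$-bilipschitz homeomorphism $\Psi : \pi^{-1}(N) \to N \times \pi^{-1}(\vec s_0)$ commuting with the projections to $N$ and carrying $Z_i \cap \pi^{-1}(N)$ onto $N \times (Z_i \cap \pi^{-1}(\vec s_0))$ for each $i$, with $\bbL = \bbL(\vec a, \vec r)$. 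Since $Q$ has dimension $\ell$ and the stratification is finite, some stratum $Q_{\alpha_0}$ has dimension $\ell$ and is therefore open in $\R^\ell$; being contained in $Q$, it lies inside $\prod_i (r_i/2, r_i)$.

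To finish, fix $\vec s_0 \in Q_{\alpha_0}$, take the corresponding $N$ and $\Psi$, and shrink $N$ to a closed box $N = \prod_{i=1}^\ell [\lambda_{\vec r,i}^{\vec a} r_i,\, \mu_{\vec r,i}^{\vec a} r_i]$; since $\vec s_0$ is an interior point of $Q$ this can be arranged with $\tfrac12 < \lambda_{\vec r,i}^{\vec a} < \mu_{\vec r,i}^{\vec a} < 1$. Given any $\lambda_i, \mu_i \in [\lambda_{\vec r,i}^{\vec a},\, \mu_{\vec r,i}^{\vec a}]$, the radii tuples $\vec\lambda\vec r$ and $\vec\mu\vec r$ lie in $N$, and composing the two slice maps of $\Psi$,
$$
\pi^{-1}(\vec\lambda\vec r) \xrightarrow{\ \Psi\ } \{\vec\lambda\vec r\} \times \pi^{-1}(\vec s_0) \;\cong\; \{\vec\mu\vec r\} \times \pi^{-1}(\vec s_0) \xrightarrow{\ \Psi^{-1}\ } \pi^{-1}(\vec\mu\vec r),
$$
produces an $\bbL^2$-bilipschitz homeomorphism $\pi^{-1}(\vec\lambda\vec r) \to \pi^{-1}(\vec\mu\vec r)$ which, by compatibility with the $Z_i$, restricts to homeomorphisms $Z_i \cap \pi^{-1}(\vec\lambda\vec r) \to Z_i \cap \pi^{-1}(\vec\mu\vec r)$ for each $i$. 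Discarding the constant $\R^\ell$-coordinate, this is exactly an $\bbL^2$-bilipschitz homeomorphism $L^{\vec a}_{\vec\lambda\vec r} \to L^{\vec a}_{\vec\mu\vec r}$ sending $L^{a_i}_{\lambda_i r_i}$ onto $L^{a_i}_{\mu_i r_i}$ for $i = 1,\dots,\ell$, with a constant $\bbL^2$ depending only on $\vec a$ and $\vec r$ --- the asserted uniform bilipschitz equivalence. (Empty fibres cause no difficulty, the statement being vacuous there, so surjectivity of $\pi$ is irrelevant.)

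The main obstacle is the invocation of \cite{PAR.94} in the second step. Parusi\'nski's theorem produces a Lipschitz stratification of a subanalytic set, compatible with finitely many prescribed subanalytic subsets, and with it the local bilipschitz triviality of the set along each stratum; what is needed here is the corresponding statement for the subanalytic \emph{map} $\pi$ --- a Lipschitz stratification of $Z$ adapted to $\pi$ (strata mapping submersively onto base strata, transverse slices lying in fibres of $\pi$) yielding local bilipschitz triviality of $\pi$ over each base stratum, with a trivialisation constant that is locally uniform on the base and with the trivialisations compatible with the $Z_i$. Deriving this map version from \cite{PAR.94}, or citing it in the precise form required, is the delicate point; the remaining ingredients --- the reduction to the fibration $Z \to Q$, the dimension count producing an open base stratum, the extraction of a box well inside $\prod_i(r_i/2, r_i)$, and the tracking of Lipschitz constants --- are routine.
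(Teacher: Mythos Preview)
Your proposal is correct and follows essentially the same route as the paper: build the total space $Z\subset\Rn\times\R^\ell$ (the paper uses $\R^\ell\times\Rn$ and calls it $L=\cup_i L_i$), apply Parusi\'nski's Lipschitz stratification compatibly with the individual $Z_i$, locate a box of parameters over which the projection is bilipschitz trivial, and read off the fibre-to-fibre equivalence. The one substantive difference is exactly the point you flagged as the ``main obstacle'': you invoke a map-version of \cite{PAR.94} as a black box, whereas the paper supplies precisely that step. Rather than citing local bilipschitz triviality of $\pi$ over base strata, the paper applies \cite[Theorem~1.4]{PAR.94} to get a Lipschitz stratification $\calS$ of the set $L$ compatible with every $L_i$, computes the critical sets of the projections $p$ and $p_i$ restricted to each stratum, chooses the box $R_{\vec r}^{\vec a}$ to avoid the (lower-dimensional) images of those critical sets, and then constructs the trivialisation by hand --- lifting each constant unit field $\mathbf e_i$ to a locally Lipschitz field on $L$ via \cite[Theorem~1.6, Lemma~1.7]{PAR.94} and following the integral curves one coordinate at a time. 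The compatibility with the $L_i$ comes from the fact that each integral curve stays within a single stratum, hence within a single $L_i$. So your outline is right, and the paper's proof is what filling in your acknowledged gap actually looks like.
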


\begin{proof}
We will work in $\R^\ell\times\Rn$.  Here each set
$$
L_i\ :=\ \bigcup_{\vec\lambda\in [\frac 12,1]^\ell }\{\vec\lambda\} \times L_{\lambda_i r_i}^{a_i }\ 
=\ \left\{ (\vec\lambda,x)\ :\ \vec\lambda\in [\frac 12,1]^\ell ,\ x\in A,\  |x-a_i|^2=\lambda_i^2r_i^2\ \right\}
$$ 
is clearly subanalytic. Let $L=\cup_{i=1}^\ell L_i=  \cup_{\vec\lambda\in [\frac 12,1]^\ell }\{\vec\lambda\} \times L_{\vec\lambda{\vec  r}}^{\vec a}$. By \cite[Theorem 1.4]{PAR.94}, there exists a locally Lipschitz trivial stratification ${\calS}$ of $ L$ having subanalytic strata and being compatible with every $ L_i$. This means that each $S\in{\calS}$ which intersects any $ L_i$ is contained in $ L_i$. Thus, the subfamily $ \{ S\in{\calS}\,:\,S\cap  L_i\neq\emptyset \}$ forms a stratification of $ L_i$ .  Let
$$
p(\vec\lambda , x)\ =\ \vec\lambda\quad{\rm and} \quad p_i(\vec\lambda , x)\ =\ \lambda_i\quad{\rm for\ each}\quad  i\in \{ 1,\dots ,\ell\}\ .
$$
For each $S\in{\calS}$,   the critical set of $p|S$,\  $Z_{S}: = S \cap \{ x:  {\rm dim}p( {\rm Tan} (S,x) ) <\ell\}$, is subanalytic. Hence, $E={\rm Bdry}[0,1]^\ell\cup\cup_{S\in{\calS}}p(Z_S)$  is a compact subanalytic set of dimension $\leq \ell -1$. Similarly the critical set of $p_i|S$,\  $Z_{S,i}: = S \cap \{ x:  p_i( {\rm Tan} (S,x) )=0\}$, is subanalytic. And  $E_i=\{ 0\} \cup\{ 1\} \cup p_i(Z_{S,i})$, being  compact and subanalytic of dimension $0$, is finite.  
\par
To prove the theorem we choose, for each $i\in\{ 1,\dots ,\ell \}$,   a nontrivial closed subinterval  $[\lambda_{\vec r, i}^{\vec a},\ \mu_{\vec r, i}^{\vec a} ]$ of one of the components $I_i$ of $[\frac 12,1]\setminus E_i$ so that the corresponding rectangle  
$$
R_{\vec r}^{\vec a}\ :=\ \prod_{i=1}^\ell [\lambda_{\vec r, i}^{\vec a},\ \mu_{\vec r, i}^{\vec a} ]\quad\subset\quad \big[\,\frac 12\, ,1\big]^\ell\setminus E\ .
$$
To obtain uniformly bilipschitz maps between $L\cap p^{-1}\{\vec\lambda\}$ and $L\cap p^{-1}\{\vec\mu\}$ for $\vec\lambda,\vec\mu\in R_{\vec r}^{\vec a}$, we repeat arguments of \cite{PAR.94} on lifting Lipschitz vectorfields to locally Lipschitz stratifications.  Specifically, as in the proof of \cite[Theorem 1.6, Lemma 1.7]{PAR.94}, the constant unit vectorfield ${\mathbf e}_i$ on $\prod_{j=1}^\ell I_j$ can be lifted via $p$ to a locally Lipschitz vectorfield  $X_i$ on $ L\cap p_i^{-1}(I_i)$. Moreover, each of the integral curves of $X_i$ stays in the stratum $S\in{\calS}$ where it begins.
\par
To go between two points $\vec\lambda,\vec\mu$ in $R_{\vec r}^{\vec a}$, we use intervals $J_1,\dots ,J_\ell$ going between the intermediary points
$$
\vec\nu_0=\vec\mu\ ,\quad \vec\nu_i=(\mu_1, \dots ,\mu_{i}, \lambda_{i+1},\dots,\lambda_\ell)\quad {\rm for}\ i=1,\dots ,\ell-1,\quad \vec\nu_\ell =\vec\lambda\ . 
$$
Thus on the interval $J_i$ which goes from $\vec\nu_{i-1}$ to $\vec\nu_i$ only the $i$th coordinate changes and the tangent is ${\mathbf e}_i$. 
Matching the endpoints of the integral curves of the lift of ${\mathbf e}_i$ lying above $J_i$ gives a bilipschitz equivalence between $ \{ {\vec\nu_{i-1}}\}\times { L}_{\vec\nu_{i-1}\vec r}^{\vec a}$ and $ \{ \vec\nu_{i}\}\times { L}_{\vec\nu_{i}\vec r}^{\vec a}$. The bilipschitz constants are bounded independent of the initial point $\vec\nu_{i-1} \in R_{\vec r}^{\vec a}$ as in \cite[Theorem 1.6, Lemma 1.7]{PAR.94}. Taking the composition of these $\ell$ bilipschitz equivalences finally gives the desired uniformly bilipschitz equivlence between $L\cap p^{-1}\{\vec\lambda\}$ and $L\cap p^{-1}\{\vec\mu\}$. Moreover, for 
each individual  $i$, $ \{ \lambda\}\times { L}_{\lambda_i r_i}^{a_i}\to\{\mu\}\times{ L}_{\mu_i r_i}^{a_i}$ because each integral curve stays completely in only one stratum $S\in{\calS}$, and each such stratum lies in at most one $L_i$.
\end{proof}

\section{Proof of the Theorem}

In this section we will use $\bc$ (rather than $\bc(A)$) to denote a constant depending only on $A$. Its value may increase in the course of the proof, even in a single chain of inequalities.
%\par
Since dividing  $A$ into its finitely many path components decomposes both $S_0$ and $\partial S_0$, we may assume that $A$ itself is path-connected.   
\subsection{Partition into contractible regions}
\par
By compactness of $A$ and Theorem \ref{thm.1}, we may choose a finite family of open balls 
$$
\{\ \bU(a_i\, ,\, r_i/2)\ :\ i=1,2,\dots\,  ,\ell\ \}
$$ 
covering $A$ where the $a_i$ are distinct points of $A$ and each $\bU(a_i,r_i)\cap A$ lies in a compact neighborhood $K_i \subset A$ of $a_i$ having a Lipschitz deformation contraction $h_i$ to $\{a_i\}$, 
\par
We would like to make, for some fixed $\vec\lambda\in [1/2,1]^\ell$,  separate adjustments of $S_0$ in each of the open balls $\ U_i^{\lambda_i} := \bU(a_i,\lambda_i r_i)\ .$ Since the $U_i^{\lambda_i}$ will likely overlap, we will work with the corresponding disjoint open sets
$$
W_1^{\vec\lambda} := U_1^{\lambda_1},\quad  W_2^{\vec\lambda} := U_2^{\lambda_2}\setminus \rmClos U_1^{\lambda_1},\quad \ldots\ ,\quad W_\ell^{\vec\lambda} := U_\ell^{\lambda_\ell}\setminus\bigcup_{h=1}^{\ell -1}\rmClos U_i^{\lambda_h }\ ,
$$
noting that $A\subset \cup_{i=1}^\ell\rmClos W_i^{\vec\lambda}$ and $\rmBdry W_i^{\vec\lambda}  \subset \cup_{h=1}^i\rmBdry U_h^{\lambda_h }$, a finite union of $n-1$ spheres. 

We will eventually  choose the radii $\lambda_1,\lambda_2,\dots ,\lambda_\ell$ in order  which determine the sets $W_1^{\vec\lambda} , W_2^{\vec\lambda} ,\dots ,W_\ell^{\vec\lambda} $ (because  $W_i^{\vec\lambda}$ only depends on  $\lambda_1,\dots , \lambda_i$\ ).
\par
In choosing the radii $\lambda_i$, there are some Lebesgue null sets of $\lambda$ that one must avoid. To describe one, first, note  that, for any finite Radon measure $\alpha$ on $\bbR^n$, the set
$$
\Lambda_\alpha :=  \{ \lambda > 0\ :\ \alpha( L_{\lambda r_i}^{a_i}) > 0\ {\rm for\ some}\ i\ \}\  
$$
is at most countable because  $ \{ \lambda :\ \alpha( L_{\lambda r_i}^{a_i}) > 1/j \}$ is finite for each $j\in{\mathbb N}$.   One useful consequences, for any positive $\lambda_1,\dots ,\lambda_i\not\in\Lambda_{\| S_0\| +\|\partial S_0\|}$, is that 
\begin{equation*}
(\| S_0\|+\|\partial S_0\|)\left({\rm Bdry} (W_i^{\vec\lambda})\right)= 0\ ,\quad
 S_0\hel\overline{W}_i^{\vec\lambda}=S_0\hel W_i^{\vec\lambda}\ ,\quad   (\partial S_0)\hel\overline{W}_i^{\vec\lambda}=(\partial S_0)\hel W_i^{\vec\lambda} ,
\end{equation*} 
for $i=1,\dots , \ell$.  The disjoint open sets $W_i^{\vec\lambda}$ now give the two decompositions
\begin{equation} 
\label{decomp}
S_0 \ =\  \sum_{i=1}^\ell S_0\hel W_i^{\vec\lambda}\quad{\rm and}\quad (\partial S_0)\  =\  \sum_{i=1}^\ell (\partial S_0)\hel W_i^{\vec\lambda}\ ,
\end{equation}
obtained by ignoring the boundaries of the $W_i^{\vec\lambda}$.

\subsection{Proof of the case $k=0$}
\begin{proof}

We will make essentially two different applications of  Corollary \ref{cor.1} with $k=0$. 
First, for $\vec\lambda =(\lambda_1,\dots ,\lambda_\ell)$  as above and  $i\in\{ 1,2,\dots ,\ell\}$, we apply Corollary  \ref{cor.1}  with
$J= (\partial S_0)\hel W_i^{\vec\lambda}\ $ to find that  
$$
E_i\ :=\ -h_{i\#} \left(\lseg 0, 1\rseg\times [(\partial S_0)\hel W_i^{\vec\lambda}]\right)
\in \bI_1(A)
$$ 
 has 
$$
\partial E_i =  (\partial S_0)\hel W_i^{\vec\lambda}\ -\ g_i\lseg a_i\rseg\quad{\rm and}\quad \bM(E_i)\leq \rmLip(h_i)\bM[(\partial S_0)\hel W_i^{\vec\lambda}]\ ,
$$ 
where $g_i = (\partial S_0)\left(\ind_{W_i^{\vec\lambda}}\right)\in \bbZ$  is the total multiplicity of $\partial S_0$ in $W_i^{\vec\lambda}$.  Observe that by (\ref{decomp}),
\begin{equation}
\label{tot.mult}
\sum_{i=1}^\ell g_i\ =\ \sum_{i=1}^\ell[ (\partial S_0)\hel W_i^{\vec\lambda})](1)\ = \left(\sum_{i=1}^\ell (\partial S_0)\hel W_i^{\vec\lambda}\right)(1)\ =\ (\partial S_0)(1)\ =\ 0\ .
\end{equation}

Second, we can use  Corollary \ref{cor.1} to bound the intrinsic diameter of $A$. Consider the simple situation in  Corollary  \ref{cor.1} when 
$J$ is a single point with multiplicity, say $J=g\lseg b,\rseg$ where $g\in\bbZ$ and $b\in K$. The resulting  $ H =  -h_{\#} \left(\lseg 0, 1\rseg\times g\lseg b\rseg\right)$ is $-g$ times the path  $t\mapsto h(t,b)$ from $b$ to $a$ and
 $$
 \partial H\ =\ g\lseg b\rseg - g\lseg a\rseg\quad{\rm and}\quad \bM (H)\ =\ |g|{\rm length}\left(h(\cdot ,b)\right)\ \leq \ |g|\rmLip h\ .
 $$
 Thus the intrinsic diameter of $K$ is $\leq 2\times {\rm length}\left(h(\cdot ,b)\right)\leq 2\rmLip h$. Inasmuch as the intrinsic diameter of the union of two intersecting sets is at most the sum of their intrinsic diameters, we readily deduce that the intrinsic diameter of $A$ is at most
 $D(A) :=2\sum_{i=1}^\ell \rmLip(h_i)$. 
 
 To complete the proof of the $k=0$ case, we choose, for each $i=1,\dots ,\ell$, a curve $\gamma_i:[0,1] \to A\ $ from $a_1$ to $a_i$ with ${\rm length}(\gamma_i)\leq D(A)$. We now define 
 $$
 S\ :=\ \sum_{i=1}^\ell E_i + g_i\gamma_{i\#}\lseg 0,1\rseg
 $$
 and verify, by (\ref{tot.mult}), that  
\begin{align*}
 \partial S\ &=\ \sum_{i=1}^\ell \left( (\partial S_0)\hel W_i^{\vec\lambda}\ -\ g_i\lseg a_i\rseg\right)\ +\ \left(g_i \lseg a_i \rseg - g_i \lseg a_1 \rseg \right)\\
 &=\ \partial S_0\ -\ \left( \sum_{i=1}^\ell g_i\right)\cdot\lseg a_1\rseg\ =\ \partial S_0\ -\ 0\ ,
\end{align*}
 and
\begin{align*}
 \bM(S)\ &\leq\ \sum_{i=1}^\ell \bM(E_i) + \sum_{i=1}^\ell |g_i| D(A) \\
 &\leq \ \sum_{i=1}^\ell \rmLip(h_i)\bM[(\partial S_0)\hel W_i^{\vec\lambda}] \ +\ D(A)\bM[(\partial S_0)\hel W_i^{\vec\lambda}]\ \leq \bc\bM(\partial S_0)\ .
\end{align*}
 \end{proof}
 
 \subsection{Proof of the case $k\geq 1$}
 
 \begin{proof}

 We use induction on $\rmdim A$.  For $\rmdim A=0$, the current  $S_0\in\bI_{k+1}(A)$ necessarily vanishes, and the theorem is trivially true. Also for $\rmdim A=1$, $S_0\in\bI_{k+1}(A)$ is nonvanishing only for $k=0$, in which case the theorem was established in the previous section \S 4.2. So we now assume that $k\geq 1$, that $\rmdim A \geq 2$ and inductively,  that the theorem is true for any compact subanalytic set of dimension less than $\rmdim A$.  
\par 
 For this inductive step,  note that the links  $ L_{\lambda_i r_i}^{a_i}= A\cap \rmBdry U_i^{\lambda_i}$, as well as all  the sets $A\cap \rmBdry W_i^{\vec\lambda}$, are compact and subanalytic of dimension $< \dim A$.  We need some more discussion about the choice of the radii $\lambda_i$ , the corresponding balls $U_i^{\lambda_i}$, disjoint open sets $W_i^{\vec\lambda}$ and their boundaries. First the distinctness  of the centers of the $U_i^{\lambda_i}$  already guarantees the pairwise transversality of their spherical boundaries. 
 \par
We  describe a particular subdivision of $\bigcup_{i=1}^\ell\rmBdry W_i^{\vec\lambda}$ by using the sequence of closed spherical regions 
$$
\Gamma_i^{\vec\lambda} := (\rmBdry U_i^{\lambda_i}) \cap (\rmBdry W_i^{\vec\lambda})\quad =\quad  (\rmBdry U_i^{\lambda_i}) \setminus \bigcup_{h=1}^{i-1} U_h^{\lambda_h}\ ,
$$
whose interiors $\ \Gamma_i^{\vec\lambda\,\circ}\ $are disjoint. While each  $\Gamma_i^{\vec\lambda}$ clearly does not overlap any $W_h^{\vec\lambda}$ for $h\leq i$, we will also be interested in the cover of $\ \Gamma_i^{\vec\lambda}\ $ by the closed $n-1$ dimensional spherical regions 
$$
\Gamma_{i,j}^{\vec\lambda} := \Gamma_i^{\vec\lambda} \cap \rmClos W_j^{\vec\lambda} \quad{\rm for} \quad j=i+1,\dots , \ell\ ,
$$
whose interiors $\Gamma_{i,j}^{{\vec\lambda}\,\circ}$ are disjoint and whose boundaries are  contained in $n-2$ dimensional spheres. Note that
$$
A \cap \rmBdry W_i^{\vec\lambda}\quad =\quad A \cap  \left(\Gamma_i^{\vec\lambda}\ \cup\ \bigcup_{h=1}^{i-1} \Gamma_{h,i}^{\vec\lambda}\right)\quad 
=\quad A \cap \left(\bigcup_{h=1}^{i-1} \Gamma_{h,i}^{\vec\lambda}\ \cup\ \bigcup_{j=i+1}^{\ell} \Gamma_{i,j}^{\vec\lambda}\right) \ .
$$
Again all the corresponding interior spherical regions, $\Gamma_{h,i}^{{\vec\lambda}\,\circ}$ or $\Gamma_{i,j}^{{\vec\lambda}\, \circ}$,  are disjoint for each fixed $i$, with $h$ ranging in $\{1,\dots ,i-1\}$ \ and $j$ ranging in $\{i+1,\dots ,\ell\}$. 
\par
Many of the partitioning domains $W_i^{\vec\lambda}$ or many of their boundary regions $\Gamma_{i,j}^{{\vec\lambda}\,\circ}$ may be empty. In any case,  for each nonempty $W_i^{\vec\lambda}$,  the boundary $\rmBdry W_i^{\vec\lambda}$ has a single nonempty ``outward-pointing'' region $ \Gamma_i^{\vec\lambda}\subset\rmBdry U_i^{\lambda_i}$, which may decompose into various $\Gamma_{i,j}^{\vec\lambda} $ for some later $j>i$. And any remainder $\rmBdry W_i^{\vec\lambda}\setminus  \Gamma_i^{\vec\lambda}$ then consists of ``inward-pointing'' regions $\Gamma_{h,i}^{\vec\lambda}$ coming from distinct spheres for some earlier $h<i$.
\par
Each set $A\cap \Gamma_i^{\vec\lambda}\ $ is contained in the link union $ L_{\vec\lambda\vec r}^{\vec a}$.
Moreover, $A\cap \Gamma_i^{\vec\lambda}$ is mapped to $A\cap \Gamma_i^{\vec\mu}$ under the bilipschitz equivalence that maps $ L_{\vec\lambda\vec r}^{\vec a}$ to $ L_{\vec\mu\vec r}^{\vec a}$, obtained in Theorem \ref{thm.2}.  
In fact, since each individual  link $L_{\lambda_ir_i}^{ a_i} = A\cap\rmBdry U_i^{\lambda_i} $ is mapped to the corresponding link $ L_{\mu_ir_i}^{ a_i} = A\cap\rmBdry U_i^{\mu_i }$,   a similar $\vec\lambda\to\vec\mu$ transfer property holds for corresponding finite intersections, set differences, connected components, or disjoint unions of such links. 
We easily see  that each region $ \Gamma^{\vec\lambda}_i$ partitions into finitely many pieces, each given as a connected component of a difference of finite intersections of spheres $\rmBdry U_j^{\lambda_j}$. Thus sets obtained by intersecting these with $A$ all inherit the desired  $\vec\lambda\to\vec\mu$ transfer property. 
\par
Since each set $ A\cap \Gamma_i^{\vec\lambda}$ is  compact and subanalytic of dimension $< \dim A$, it has, by our induction on $\dim A$, its own linear isoperimetric inequality.  
Moreover, by the bilipschitz equivalence from Theorem \ref{thm.2} and Proposition \ref{33}, we may now assume 
\vskip.5cm

\noindent{\bf (*)}
{\it \quad \quad The linear isoperimetric inequality is true with the {\bf same constant} $\ \bc\ $ for }
\begin{equation*}
 \ A\cap \Gamma_1^{\vec\lambda}\ ,\ \ldots\ ,\ A\cap \Gamma_\ell^{\vec\lambda} \quad {\bf for\ all}\quad \vec\lambda\in R_{\vec r}^{\vec a}\ .
\end{equation*}
The uniformity of this estimate will allow us to choose and fix an $\ell$ tuple of scaling factors $\vec\lambda=(\lambda_1,\dots ,\lambda_\ell)\in R_{\vec r}^{\vec a}a\ $   depending on the given chain $S_0\in{\bI}_{k+1}(A)$, but still have mass estimates independent of $S_0$.
\par 
As we did in \S 4.1, we again consider some Lebesgue null sets of $\lambda_i$ to avoid to guarantee some desired properties relative to the given chain.%
 \par
 For any fixed $S\in\bI_{k+1}({\bbR^n} )$ and positive $\lambda_1,\dots ,\lambda_i\not\in\Lambda_{\| S\| +\|\partial S\|}$,  we see as before that 
\begin{equation*}
(\| S\|+\|\partial S\|)\left({\rm Bdry} (W_h^{\vec\lambda})\right)= 0\ ,\quad
 S\hel\overline{W}_h^{\vec\lambda}=S\hel W_h^{\vec\lambda}\ ,\quad   (\partial S)\hel\overline{W}_h^{\vec\lambda}=(\partial S)\hel W_h^{\vec\lambda} ,
\end{equation*} 
for $h=1,\dots , i$.  Now we find, in $\cup_{h=1}^i U_h^{\lambda_h}$, the two decompositions
\begin{equation} 
\label{eq.1}
S\hel\left( \cup_{h=1}^i U_h^{\lambda_h}\right) \ =\  \sum_{h=1}^i S\hel W_h^{\vec\lambda}\quad{\rm and}\quad (\partial S)\hel\left( \cup_{h=1}^i U_h^{\lambda_h}\right)  =\  \sum_{h=1}^i (\partial S)\hel W_h^{\vec\lambda}\ ,
\end{equation}
obtained by ignoring the boundaries of the $W_h^{\vec\lambda}$.
\par
Using the Lipschitz map $u_i( x)=r_i^{-1}|x-a_i|$, we have that $U_i^{\lambda}\ =\ \{x\, :\,u_i(x)<\lambda\}$,
and there is another exceptional null set $\Lambda^S_i\subset [0,1]$,   on the complement of which one obtains integral chain slices satisfying the formulas
\begin{equation} 
\label{eq.2}
\begin{split}
\langle S,u_i,\lambda\rangle & = \partial (S\hel U_i^\lambda)- (\partial S)\hel U_i^\lambda \quad\in \bI_k(A \cap \rmBdry U_i^\lambda)\ , \\
\partial\langle S,u_i,\lambda\rangle\ &=\ -\langle  \partial S,u_i,\lambda \rangle   = -\partial [ (\partial S)\hel U_i^{\lambda }] \quad \in \bI_{k-1}(A \cap \rmBdry U_i^\lambda)\ .
\end{split}
\end{equation}
and the corresponding measures satisfy
$$
\| \partial (S\hel U_i^{\lambda})\|\ =\ \|\partial S\|\hel U_i^{\lambda}\ +\ \| \langle S,u_i,\lambda\rangle\|\ ,\quad\
\|\partial [(\partial S)\hel U_i^{\lambda}]\,\|\ =\ \| \langle \partial S,u_i,\lambda\rangle \|\ ,
$$
with $ \|\partial S\|\hel U_i^{\lambda}$ and $ \| \langle S,u_i,\lambda\rangle\|$ being mutually singular.

Moreover, since $\rmLip u_i =  r_i^{-1}$, we have the integral slice mass inequality
\begin{equation}
\label{eq.3}
\int_{\lambda_{\vec r, i}^{\vec a}}^{\mu_{\vec r, i}^{\vec a}}\bM \langle \partial S,u_i,\lambda\rangle\, d\lambda \leq r_i^{-1}\bM( \partial S)\ .
\end{equation}

\noindent We will also need a similar discussion for a fixed $Q\in\bI_{k}(A \cap \Gamma_i^{\vec\lambda})$. If  $j\in \{i+1,\dots ,\ell\}$ and $\lambda_j\not\in\Lambda_{\| Q\|}\ $, then $\ \| Q\|\left({\rmBdry}(W_j^{\vec\lambda})\right)= 0\ $. In case this is true for every such $j\in\{i+1,\dots ,\kappa\}$, one gets the decomposition
\begin{equation}
\label{eq.4}
Q\hel (U_1^{\lambda_1}\cup\dots\cup U_\kappa^{\lambda_\kappa})\ =\ \sum_{j=i+1}^\kappa Q\hel \Gamma_{i,j}^{\vec\lambda}\ . 
\end{equation}
There is also another exceptional null set $\Lambda^Q_i\subset [0,1]$  so that each $\lambda\in [0,1]\setminus \Lambda^Q_i$ gives an integral chain slices  $\langle Q,u_i,\lambda\rangle $  satisfying 
\begin{equation} 
\label{eq.5}
\langle Q,u_i,\lambda\rangle  = \partial (Q\hel U_i^\lambda)- (\partial Q)\hel U_i^\lambda \quad \in \bI_{k-1}(A \cap \Gamma_i^{\vec\lambda})\ , 
\end{equation}
with the orthogonal decomposition of measures
$$
\| \partial (Q\hel U_i^{\lambda})\|\ =\ \|\partial Q\|\hel U_i^{\lambda}\ +\ \| \langle Q,u_i,\lambda\rangle\|\ .
$$

We shall now choose below $\lambda_1,\lambda_2,\dots ,\lambda_\ell$ in order, and use these radii to construct chains 

\noindent$H_1,\dots H_\ell\in\bf I_{k+1}(A)$ so that
\begin{equation*}
 \partial (H_1+\cdots +H_\ell)\ =\ \partial S_0 \quad {\rm and}\quad \bM (H_1)\ , \dots ,\ \bM(H_\ell)\ \leq \bc \bM (\partial S_0)\  ,
\end{equation*}
with $\bc$ depending only on $A$, and independent of $S_0$. This will complete the proof by letting $S=H_1+\cdots +H_\ell$. \ 
\par
For our first choice, we apply (\ref{eq.2}) and the second inequality of (\ref{eq.3}) with $i=1$, $S=S_0$,  to find a ``good'' radius
$$
\lambda_1\in [\lambda_{\vec r, 1}^{\vec a},\mu_{\vec r, 1}^{\vec a} ]\ \setminus\ \left(\Lambda_{\| S_{0}\|+\|\partial S_{0}\| }\cup \Lambda^{S_{0}}_1\right)
$$ 
so that the chain  
\begin{equation*}
R_1\ :=\ \partial \langle S_0,u_1,\lambda_1\rangle\ =\ -\langle\partial S_0,u_1,\lambda_1\rangle\ \ = - \partial [(\partial S_0)\hel U_1^{\lambda_1}]\ 
 \in \bI_{k-1}(A \cap \Gamma_1^{\vec\lambda})
\end{equation*}
has mass satisfying
$$
 \bM(R_1)\ \leq\ [(\mu_{\vec r, 1}^{\vec a} -\lambda_{\vec r, 1}^{\vec a} )r_i]^{-1}\bM(\partial S_0)\ =\ \bc\bM(\partial S_0)\ .
$$
Since the chain $P_1 : = \langle S_0,u_1,\lambda_1\rangle \in\bI_{k}(A\cap \Gamma_1^{\vec\lambda})$ has $\partial P_1 = R_1$,  we may apply our dimension induction (*), with $A$ replaced by 
the lower dimensional set $A\cap \Gamma_1^{\vec\lambda}$ and $S_0$ replaced by  $P_1$, to obtain a chain ${Q_1}\in\bI_{k}(A\cap \Gamma_1^{\vec\lambda})$ with $\partial  Q_1 = \partial  P_1 = R_1$ and
$$
 \bM( Q_1)\ \leq \bc\bM(\partial Q_1)\ =\  \bc \bM (R_1)\ \leq\ \bc\bM(\partial S_0)\ .
$$
Note that $\quad J_1\ :=\ (\partial S_0)\hel W_1^{\vec\lambda}\ +\ Q_1\quad \in \bI_k(A \cap \rmClos U_1^{\lambda_1})$ has
$$
\partial J_1\ =\   \partial[(\partial S_0)\hel W_1^{\vec\lambda}] +\ \partial P_1\ =\ -R_1+R_1\ =\ 0\ ,
$$
and
$$
 \bM(J_1)\ \leq\ \bM(\partial S_0)+ \bM( Q_1) \ \leq\ \bc\bM(\partial S_0)\ .
$$
Inasmuch as the cycle $J_1$ has support  in $\ A\cap\bU  (a_1,r_1)\ $, we infer from  Corollary {\ref{cor.1}} that the contraction 
$$
H_1\ := \  -h_{1\#}\left(\lseg 0,1\rseg\times J_1\right)\ \in \bI_{k+1}(A)\ 
$$
has  $\quad \partial H_1\ = \ J_1\quad$ and 
$$
\bM (H_1) +  \bM(\partial H_1)\ \leq\ \left((\rmLip h_1)^{k+1}+1\right) \bM(J_1)\ \leq\ \bc\bM(\partial S_0)\ .
$$
Letting $S_1 := S_0 - H_1$, we have that
\begin{equation*}
\bM(\partial S_1)\ \leq  \bM(\partial S_0)+\bM(\partial H_1)\ \leq \bc\bM(\partial S_0)\ ,
\end{equation*}
and we easily see that one may replace $S_0$ by $S_1$ to prove the theorem.  The advantage in passing from $S_0$ to $S_1$ is that subtractng $H_1$ essentially ``moves the boundary'' out of the ball $U_1^{\lambda_1}$. In fact, the chain
\begin{equation*}
\partial S_1\ =\ \partial S_0 -\partial H_1\ =\ \partial S_0  - (\partial S_0)\hel W_1^{\vec\lambda}-Q_1\ =\  (\partial S_0)\hel (A\setminus W_1^{\vec\lambda})-Q_1\ 
\end{equation*}
has
$$
\rmspt (\partial S_1)\ \subset\  (A\setminus W_1^{\vec\lambda})\cup  \Gamma_1^{\vec\lambda} \ =\  A\setminus U_1^{\lambda_1}\ .
$$
\vskip.2cm
\par
We wish to continue moving the boundary in steps out of the remaining balls $U_i^{\lambda_i}$. However, to find $H_2$ for the next modification $\ S_2 = S_1-H_2\ $, or generally $H_i$ for $\ S_i=S_{i-1}-H_i$, is somewhat more involved. For example, to get the boundary of $S_2$ out of both balls
$$
U_1^{\lambda_1}\cup U_2^{\lambda_2}\ =\ W_1^{\vec\lambda}\cup \Gamma_{1,2}^{\vec\lambda\ o}\cup W_2^{\vec\lambda}\ ,
$$
we will below need to choose $H_2$ to attach to $H_1$ along the interface $\Gamma_{1,2}^{\vec\lambda\ o}$.  For the reader's convenience, we will first go through this second step, involving the choice of $H_2$, carefully before describing the general $i$th step.
\par
%%%%%%%%%%%%%%%%%%%%%%%%%%%%%%%%%%%
 For the second step, we again start with the choice of radius  $\lambda_2$. By
applying (\ref{eq.2}), (\ref{eq.3}), (\ref{eq.4}), (\ref{eq.5}) with $i=2$, $S=S_1$, and $Q=Q_1$.  We obtain
$$               
\lambda_2\in [\lambda_{\vec r, 2}^{\vec a},\mu_{\vec r, 2}^{\vec a} ]\ \setminus\ \left(\Lambda_{\| S_{0}\|+\|\partial S_{0}\|+\| S_{1}\|+\|\partial S_{1}\|+\| Q_1\| }\cup \Lambda^{S_{0}}_1\cup \Lambda^{S_{1}}_1\cup \Lambda^{Q_{1}}_1\right)
$$ 
so that the chain
\begin{equation*}
R_2\ :=\   \partial \langle S_1,u_2,\lambda_2\rangle\ =\ -\langle\partial S_1,u_2,\lambda_2\rangle\ =  -\partial [(\partial S_1)\hel U_2^{\lambda_2}]
\end{equation*}
has mass satisfying
\begin{equation*}
\bM(R_2)\ \leq\  [(\mu_{\vec r, 2}^{\vec a} -\lambda_{\vec r, 2}^{\vec a} )r_2]^{-1}\bM(\partial S_1)\
 \leq\ \bc\bM(\partial S_1)\ \leq \ \bc\bM(\partial S_0)\  .
\end{equation*}
\par
Since the chain $P_2 : = \langle S_1,u_2,\lambda_2\rangle \in\bI_{k}(A\cap \Gamma_2^{\vec\lambda})$ has $\partial P_2 = R_2$,  we may again apply our dimension induction (*), this time with $A$ replaced by the lower dimensional set $A\cap \Gamma_2^{\vec\lambda}$ and $S_0$ replaced by  $P_2$, to obtain a chain ${Q_2}\in\bI_{k}(A\cap \Gamma_2^{\vec\lambda})$ with $\partial  Q_2 = \partial  P_2 = R_2$ and
$$
 \bM( Q_2)\ \leq \bc\bM(\partial Q_2)\ =\  \bc \bM (R_2)\ \leq\ \bc\bM(\partial S_0)\ .
 $$
 The chain $Q_{1,2}:=Q_1\hel\Gamma_{1,2}^{\vec\lambda} =  Q_1\hel\ \overline{W}_2^{\vec\lambda}$ shows up in the boundary calculation 
\begin{equation*}
\begin{split}
\partial(S_1\hel U_2^{\lambda_2} )\ &=\  \langle S_1,u_2,\lambda_2\rangle\ +\ (\partial S_1)\hel U_2^{\lambda_2}  \\
&=\  P_2\ +\ (\partial S_1)\hel \overline{W}_2^{\vec\lambda} \\
&=\ P_2\ +\ (\partial S_0)\hel \overline{W}_2^{\vec\lambda} \  -\ (\partial H_1)\hel \overline{W}_2^{\vec\lambda}   \\
&=\ P_2\ +\ (\partial S_0)\hel \overline{W}_2^{\vec\lambda} \   -\  (\partial S_0)\hel\ (W_1^{\vec\lambda}\cap \overline{W}_2^{\vec\lambda}) \ -\ Q_1\hel  \overline{W}_2^{\vec\lambda} \\ 
&=\ P_2\ +\ (\partial S_0)\hel\ W_2^{\vec\lambda}\  +0\ -\  Q_{1,2}\ .
\end{split}
\end{equation*}
Thus the chain  $\quad J_2\ :=\ (\partial S_0)\hel\ W_2^{\vec\lambda}+Q_2 - Q_{1,2}\quad$ has 
$$
\partial J_2 \ =\ \partial [ (\partial S_0)\hel\ W_2^{\vec\lambda}+P_2 - Q_{1,2}]\ =\  \partial^2 (S_1\hel  U_2^{\vec\lambda} )\ =\ 0\ ,
$$
 and
$$
 \bM(J_2)\leq\ \bM(\partial S_0)+ \bM( Q_2)+ \bM( Q_1) \ \leq\ \bc\bM(\partial S_0)\ .
$$
Since $J_2$ also has support in $\ A\cap\bU  (a_2,r_2)\ $, we find from Corollary {\ref{cor.1}} that the contraction 
$$
H_2\ := \  -h_{2\#}\left(\lseg 0,1\rseg\times J_2\ \right)\ \in \bI_{k+1}(A)\ 
$$
has $\quad \partial H_2\ = \   J_2\quad$ and 
$$
\bM (H_2) +  \bM(\partial H_2)\ \leq\ \left((\rmLip h_2)^{k+1}+1\right) \bM(J_2)\ \leq\ \bc\bM(\partial S_0)\ .
$$
Letting $S_2 := S_1 - H_2 = S_0-H_1-H_2$\ , we have
\begin{equation*}
\bM(\partial S_2)\ \leq  \bc[\bM(\partial S_1)+\bM(\partial H_2)]\ \leq \bc\bM(\partial S_0)\ ,
\end{equation*}
and
\begin{equation*}
\begin{split}
\partial S_2\ &= \partial S_1 -\partial H_2\ =\  (\partial S_0)\hel (A\setminus W_1^{\vec\lambda})-Q_1\  -Q_2 +Q_{1,2}  -(\partial S_0)\hel W_2^{\vec\lambda}\\
&=\     (\partial S_0)\hel [A\setminus (W_1^{\vec\lambda}\cup W_2^{\vec\lambda})]\ -\ Q_1+Q_{1,2}   -\ Q_2\\
&=\     (\partial S_0)\hel [A\setminus (U_1^{\lambda_1}\cup U_2^{\lambda_2})]\ -\ Q_1+Q_{1,2}   -\ Q_2\ .
\end{split}
\end{equation*}
because $\|\partial S_0)\| (\rmBdry U_1^{\lambda_1}) = 0$.  Inasmuch as
$$
Q_1\hel U_1^{\lambda_1}= 0\ ,\ \  Q_1\hel U_2^{\lambda_2} = Q_{1,2}\ ,\ \  Q_{1,2}\hel U_1^{\lambda_1}=0\ ,\ \ Q_2\hel U_1^{\lambda_1}= 0\ ,\ \   Q_2\hel U_2^{\lambda_2} =  0 \ ,
$$
we see that
\begin{equation*}
\begin{split}
(\partial S_2)\hel (U_1^{\lambda_1}\cup U_2^{\lambda_2} )\ &=\ 0\ -\ (Q_1-Q_{1,2} ) \hel (U_1^{\lambda_1}\cup U_2^{\lambda_2})\  -\ Q_2\hel (U_1^{\lambda_1}\cup U_2^{\lambda_2} )\\
&=\ 0\ -\ (Q_{1,2} - Q_{1,2} )  -\ 0\ =\ 0\ ,
\end{split}
\end{equation*}
and conclude that 
$$
\rmspt (\partial S_2)\ \subset \ A\setminus\  (U_1^{\lambda_1}\cup U_2^{\lambda_2})\ ,
$$
which completes the second step. 
\vskip.2cm
%%%%%%%%%%%%%%%%%%%%%%%%%%%%%%%%%%%%%%
%%%%%%%%%%%%%%%%%%%%%%%%%%%%%%%%%%%%%%%
%\begin{equation*}
%\begin{split}
%&\partial(S_2\hel U_3^{\lambda_3} )\ =\  \langle S_2,u_3,\lambda_3\rangle+(\partial S_2)\hel U_3^{\lambda_3} \\
%&= P_3 + (\partial S_0)\hel U_3^{\lambda_3}   - (\partial H_1)\hel U_3^{\lambda_3} - (\partial H_2)\hel U_3^{\lambda_3} \\
%&= P_3 + (\partial S_0)\hel U_3^{\lambda_3}   -  (\partial S_0)\hel\ U_3^{\lambda_3} - Q_1\hel  U_3^{\lambda_3} -  (\partial S_0)\hel
%U_3^{\lambda_3}  - Q_2\hel  U_3^{\lambda_3}+Q_{1,2}\hel U_3^{\lambda_3}\\
%&= P_3 + (\partial S_0)\hel\ W_3^{\vec\lambda} - Q_{1,3} - Q_{2,3}\ .
%\end{split}
%\end{equation*}

%%%%%%%%%%%%%%%%%%%%%%%%%%%%%%%%%%%%%%
%%%%%%%%%%%%%%%%%%%%%%%%%%%%%%%%%%%
%%%%%%%%%%%%%%%%%%%%%%%%%%%%%%%%%%%%%
\vskip.2cm

%%%%%%%%%%%%%%%%%%%%%%%%%%%%%%%%%%%%%%%%%%%%%
%%%%%%%%%%%%%%%%%%%%%%%%%%%%%%%%%%%%%%%%%%%%%%
%%%%%%%%%%%%%%%%%%%%%%%%%%%%%%%%%%%%%%%%%%%%%%%%
%%%%%%%%%%%%%%%%%%%%%%%%%%%%%%%%%%%%%%%%%%%%%%%%
Modifying the above, we now describe how to obtain the $i$th step from the $(i-1)$st.  We assume that  $i\leq\ell-1$ and that we have already chosen,
\noindent for each $h\in \{1,\ \dots ,i-1\ \}$, 

$$
\lambda_h >0\ ,\quad Q_h\in\bI_{k}(A\cap \Gamma_h^{\vec\lambda})\ ,\quad H_h\in\bI_{k+1}(A)\ ,\quad S_{h}:=S_0-\sum_{\eta =1}^{i-1}H_\eta\in\bI_{k+1}(A)\ ,
$$
to satisfy the three conditions:
\vskip.2cm
\hskip.5cm (I)$_h$ $\quad\partial H_h\ = \ (\partial S_0)\hel\ W_h^{\vec\lambda}\ +\ Q_h -\ \sum_{\eta =1}^{h-1}Q_{\eta,h}\ $ where $\ Q_{\eta,h}:= Q_\eta\hel\Gamma_{\eta,h}^{\vec\lambda}$\ ,
\vskip.2cm
\hskip.5cm (II)$_h$ $\quad\bM(Q_h)\ +\ \bM(H_h)\ +\ \bM(\partial H_h)\ +\ \bM(\partial S_h)\ \leq\ \bc\bM(\partial S_0)$\ .
\vskip.2cm
\hskip.5cm (III)$_h$  $\quad\quad\quad\rmspt (\partial S_h)\ \subset \ A\setminus\  (U_1^{\lambda_1}\cup\ \cdots\ \cup U_{h}^{\lambda_{h}}\ )$\ ,
\vskip.3cm
\noindent(Here the formula for $\partial H_1$ is correct with the convention $\sum_{\eta =1}^{0}Q_{\eta,h}=0$.)
\par
\vskip.2cm
We first choose $\lambda_i$ by applying (\ref{eq.2}), (\ref{eq.3}), (\ref{eq.4}), and (\ref{eq.5}),  this time  with $i=i$, $S=S_{i-1}$, and $Q=Q_1,\dots,Q_{i-1}$\ . We obtain
\begin{equation*}
%\begin{split}             
\lambda_i \in [\lambda_{\vec r, i}^{\vec a},\mu_{\vec r, i}^{\vec a} ]\ \setminus\ \Lambda_{\sum_{h=0}^{i-1}\| S_{h}\|+\|\partial S_{h}\|+\|Q_h\| }
\setminus\cup_{h=0}^{i-1}\left(\Lambda^{S_{h}}_1\cup \Lambda^{Q_{h}}_1\right)
%\end{split}
\end{equation*}
so that the chain
%s $\ \langle Q_{h},u_i,\lambda_i\rangle\ $, for $h=1,\dots,i-1$, and
\begin{equation*}
R_i\ :=\   \partial \langle S_{i-1},u_i,\lambda_i\rangle\ =\ -\langle\partial S_{i-1},u_i,\lambda_i\rangle\ =  -\partial [(\partial S_{i-1})\hel U_i^{\lambda_i}] \in \bI_{k-1}(A \cap \Gamma_i^{\vec\lambda})
\end{equation*}
has mass satisfying
\begin{equation*}
%\begin{split}   
%\bM\langle Q_{h},u_i,\lambda_i\rangle\ &\leq\ [(\mu_{\vec r, i}^{\vec a} -\lambda_{\vec r, i}^{\vec a} )r_i]^{-1}\bM(Q_{h}) \leq\ \bc\bM(Q_h)\ \leq\ \bc\bM(\partial S_0)\ ,\\
 \bM(R_i)\ \leq\  [(\mu_{\vec r, i}^{\vec a} -\lambda_{\vec r, i}^{\vec a} )r_i]^{-1}\bM(\partial S_{i-1})\ \leq\ \bc\bM(\partial S_{i-1})\ \leq \bc\bM(\partial S_0)\ .
%\end{split}
\end{equation*}
Since the chain $P_{i} : = \langle S_{i-1},u_i,\lambda_i\rangle \in\bI_{k-1}(A\cap \Gamma_{i}^{\vec\lambda})$ has $\partial P_i = R_i$,  we may again apply our dimension induction (*), this time with $A$ replaced by the lower dimensional set $A\cap \Gamma_i^{\vec\lambda}$ and with $S_0$ replaced by  $P_i$, to obtain a chain ${Q_i}\in\bI_{k}(A\cap \Gamma_i^{\vec\lambda})$ such that $\ \partial  Q_i = \partial  P_i = R_i\ $ and
$$
\bM(Q_i)\ \leq\ \bc\bM(\partial Q_i)\ =\  \bc \bM (R_i)\ \leq\ \bc\bM(\partial S_0)\ .
 $$
 The chains  \quad$Q_{h,i}: = Q_h\hel  \Gamma_{h,i}^{\vec\lambda}= Q_h\hel\overline{W}_i^{\vec\lambda}$\quad for $h=1,\dots ,i-1$,  show up in the following boundary calculation using (\ref{eq.2}),  (III)$_{i-1}$,  and (II)$_h$,
\begin{equation*}
\begin{split}
\partial(S_{i-1}&\hel U_i^{\lambda_i} )\ =\  \langle S_{i-1},u_i,\lambda_i\rangle\ +\ (\partial S_{i-1})\hel U_i^{\lambda_i} \\
&=\  \langle S_{i-1},u_i,\lambda_i\rangle\ +\ (\partial S_{i-1})\hel \overline{W}_i^{\vec\lambda} \\
&=\ P_i\ +\ (\partial S_0)\hel \overline{W}_i^{\vec\lambda} \  -\  \sum_{h=1}^{i-1} (\partial H_h)\hel \overline{W}_i^{\vec\lambda} \\ 
&=\ P_i\ +\  (\partial S_0)\hel \overline{W}_i^{\vec\lambda} \  -\   \sum_{h=1}^{i-1} \left( (\partial S_0)\hel\ (W_h^{\vec\lambda}\cap \overline{W}_i^{\vec\lambda})\ +\ Q_h\hel  \overline{W}_i^{\vec\lambda}\ -\ \sum_{\eta=1}^h Q_{\eta,h} \hel\overline{W}_i^{\vec\lambda}\right)\\
&=\ P_i \ +\  (\partial S_0)\hel\ W_i^{\vec\lambda}\ -\ \sum_{h=1}^{i-1}\ (\ 0\ +\ Q_{h,i}\ -\ 0\ )\  .
\end{split}
\end{equation*}
Then the chain  $\quad J_i\ =\  (\partial S_0)\hel W_i^{\vec\lambda}\ +\ Q_i\  -\ \sum_{h=1}^{i-1} Q_{h,i} \in \bI_k(A \cap \rmClos U_i^{\lambda_i})\quad$ satisfies
\begin{equation*}
\partial J_i\ =\ \partial \left(\ (\partial S_0)\hel\ W_i^{\vec\lambda}\ +\ P_i\  -\ \sum_{h=1}^{i-1}Q_{h,i} \right)\
=\  \partial^2 (S_{i-1}\hel  U_{i}^{\lambda_i} )\ =\ 0\ 
\end{equation*}
and, by  (II)$_{h}$,
$$
\bM(J_i)\ \leq\ \bM(\partial S_0)\ +\ \bM( Q_i) + \sum_{h=1}^{i-1}  \bM(Q_h) \ \leq\ \bc\bM(\partial S_0)\ .
$$
\noindent Since this cycle  has support in $A\cap\bU (a_i,r_i)$, we see from Corollary \ref{cor.1} that the contraction
$$
H_i\ := \  -h_{i\#} \left[ \lseg 0,1\rseg\times J_i \ \right]\ \in \bI_{k+1}(A)\ 
$$
has $\quad\partial H_i\ = \   J_i \quad$ and  that
$$
\bM (H_i) +  \bM(\partial H_i)\ \leq\  \left( (\rmLip h_i)^{k+1}+1 \right) \bM(J_i) \leq\ \bc\bM(\partial S_0)\ .
$$

Letting $\ S_i :=\ S_{i-1} - H_i\ =\ S_0- \sum_{h=1}^{i} H_h$\ , we have from (II)$_{i-1}$ that
\begin{equation*}
\bM(\partial S_i)\ \leq\  \bc[\bM(\partial S_{i-1})+\bM(\partial H_i)]\ \leq\ \bc\bM(\partial S_0)\ ,
\end{equation*}
and from (I)$_{h}$ that
\begin{equation*}
\begin{split}
\partial S_i\ &= \ \partial S_0\ -\ \sum_{h=1}^{i}\partial H_h\\
&= \ \partial S_0\ -\ \sum_{h=1}^i \left(\ (\partial S_0)\hel W_h^{\vec\lambda})\ +\  Q_h-\sum_{\eta=1}^{h-1}Q_{\eta,h}\ \right)  \\
&= \ (\partial S_0)\hel (A\setminus \cup_{h=1}^{i} W_h^{\vec\lambda})\ -\ \sum_{h=1}^i \left(Q_h-\sum_{\eta=1}^{h-1}Q_{\eta,h}\ \right)  \\
&=\  (\partial S_0)\hel (A\setminus \cup_{h=1}^{i} U_h^{\lambda_h})\ -\ \sum_{h=1}^i Q_h\ +\ \sum_{h=2}^i\sum_{\eta=1}^{h-1}Q_{\eta,h}
\end{split}
\end{equation*}
because $\ \|\partial S_0)\| (\rmBdry U_h^{\lambda_h}) = 0$ for $h=1,\dots , i\ $.  Abbreviating ${\tilde U}_i=\cup_{h=1}^{i} U_h^{\lambda_h}$, we see that  $\ Q_i\hel {\tilde U}_i = 0\ $ because $\Gamma_i^{\vec\lambda}\cap {\tilde U}_i =\ \emptyset$, and that we may decompose $Q_h\hel {\tilde U}_i$ by applying (\ref{eq.4}) with $Q$, $i$, $\kappa$ replaced by  $Q_h$, $h$, $i$. We deduce that
\begin{equation*}
\begin{split}
(\partial S_i)\hel {\tilde U}_i\ &= 
\ (\partial S_0)\hel (A\setminus {\tilde U}_i)\hel {\tilde U}_i\ -\ \sum_{h=1}^{i-1} Q_h\hel {\tilde U}_i\ +\  \sum_{h=2}^{i} \sum_{\eta=1}^{h-1}Q_{\eta,h}\hel {\tilde U}_i\\
&=\ 0\  -\ \sum_{h=1}^{i-1}  \sum_{j=h+1}^{i} Q_{h,j}\ +\  \sum_{h=2}^{i} \sum_{\eta=1}^{h-1}Q_{\eta,h}\quad =\quad 0\ .
\end{split}
\end{equation*}
Thus 
$$
\rmspt (\partial S_i)\ \subset \ A\setminus\ {\tilde U}_i\ =\ A\setminus\  (U_1^{\lambda_1}\cup\ \cdots\ \cup U_{h}^{\lambda_{h}}\ )\ ,
$$
\vskip.2cm
\noindent and we have now verified (I)$_i$, (II)$_i$, and (III)$_i$ and completed the $i$th step for all $i\leq \ell-1$.  In particular,
\begin{equation*}
\rmspt (\partial S_{\ell-1})\ \subset \ A\setminus \cup_{i=1}^{\ell-1}U_i^{\lambda_i}\ \subset\ A\setminus\cup_{i=1}^{\ell-1}\bU(a_i,r_i/2)\ \subset A\cap \bU(a_\ell,r_\ell/2)\ .
\end{equation*}
Finally by defining $\ H_\ell\ : =\ -h_{\ell\#} \left( \lseg 0,1\rseg\times \partial S_{\ell-1}\right)\ $ and $\ S=H_1+\cdots +H_\ell\ $, we find that
$$
\partial H_\ell\ =\  \partial S_{\ell-1} \ =\ \partial S_0\ -\ \sum_{i=1}^{\ell-1}\partial H_i\ ;\quad{\rm hence,}\quad \partial S\ = \partial S_0\ ,
$$
and, by using  (II)$_1$,\dots ,(II)$_{\ell-1}$, that
$$
\bM(S)\ \leq\  \bc\sum_{i=1}^{\ell-1}\bM(\partial H_i) + \bM (H_\ell)\ \leq\ \bc\bM(\partial S_0) +   (\rmLip h_\ell)^k \bM(\partial S_{\ell-1})\ \leq\ \bc\bM(\partial S_0)\ ,
$$
which completes the proof.
\end{proof}
%%%%%%%%%%%%%%%%%%%%%%

%%%%%%%%%%%%%%%%%%%%%%%%
%%%%%%%%%%%%%%%%%%%

%%%%%%%%%%%%%%%%%%%%

\section{Applications}

\begin{Empty}[Normal Currents]
\label{51}
The exact same proof shows that {\it  if $A \subset \Rn$ is compact and subanalytic, $k=0,1,2,\ldots$ and $S_0 \in \bN_{k+1}A)$ is a normal current of dimension $k+1$ supported in $A$, then there exists $S \in \bN_{k+1}(A)$ such that $\partial S=\partial S_0$ and} $\bM(S) \leq \bc(A) \bM(\partial S)$. This seems to be new even in the case when $A$ is a compact real analytic submanifold of $\Rn$.
\end{Empty}

\begin{Empty}[Other coefficients groups]
\label{52}
We observe that the proof further generalizes to the case of a general normed, complete, Abelian group $G$ of coefficients, \cite{WHI.99.rectifiability}, \cite{DEP.HAR.07}. Here $\calR_k(A;G)$ and $\calF_k(A;G)$ denote the groups consisting of those $k$ dimensional, respectively rectifiable and flat chains with coefficients in $G$, supported in $A$, and 
\begin{equation*}
\begin{split}
\calI_{k+1}(A;G) & = \calR_{k+1}(A;G) \cap \left\{ S : \partial S \in \calR_{k}(A;G) \right\} \\
\calN_k(A;G) & = \calF_k(A;G) \cap \left\{ S : \bM(S) + \bM(\partial S) < \infty \right\}
\end{split}
\end{equation*}
where $\bM$ denotes the usual Euclidean Hausdorff mass of a rectifiable chain in $\Rn$, relaxed to the class of flat chains. {\it If $k=1,2,\ldots$ and $S_0 \in \calI_{k+1}(A;G)$ (resp. $S_0 \in \calN_{k+1}(A;G)$), then there exists $S \in \calI_{k+1}(A;G)$ (resp. $S \in \calN_{k+1}(A;G)$) such that $\partial S=\partial S_0$ and $\bM(S ) \leq \bc(A) \bM(\partial S)$}. This indeed encompasses the previous cases since $\calI_{k+1}(A;\Z)\cong\bI_{k+1}(A)$ and $\calN_{k+1}(A;\R)\cong\bN_{k+1}(A)$.
\par 
We ought to say a word about the case $k=0$ of the proof. Here one replaces the expression $(\partial S_0)\left(\ind_{W_i^{\vec\lambda}}\right)$ using the total multiplicity morphism $\chi : \calF_0(A;G) \to G$, see \cite[4.3.3]{DEP.HAR.07} to find that $g_i = \chi[(\partial S_0) \hel W_i^{\vec\lambda} ] $, while recalling  that $\chi$ is finitely additive and that $|\chi(T)| \leq \calF(T) \leq \bM(T)$.
\end{Empty}

\begin{Empty}[Comparing homology groups] 
\label{53}
We use the same notations as in \ref{52}, and we define the groups of cycles and boundaries
\begin{equation*}
\begin{split}
\bZ_k^\calI(A;G) &= \calI_{k}(A;G) \cap \left\{ T : \partial T = 0 \right\} \\
\bB_k^\calI(A;G) &= \calI_{k}(A;G) \cap \left\{ T : T = \partial S \text{ for some } S \in \calI_{k+1}(A;G) \right\}
\end{split}
\end{equation*}

\noindent as well as the corresponding homology group $\bH^\calI_{k}(A;G) = \bZ_k^\calI(A;G)/\bB_k^\calI(A;G)$. One checks that $\bH^\calI_0(\{0\};G)=G$ and, as in \cite[3.7, 3.9, 3.10, 3.14]{DEP.06.HOM} one shows that the functors $\bH^\calI_k(\cdot;G)$ and $\check{H}_k(\cdot;G)$ (\v{C}ech homology with coefficients in $G$) are naturally equivalent on the category of $(\bH^\calI,k)$ locally connected subsets of Euclidean space and their Lipschitz maps. According to Theorem \ref{thm.1} each compact subanalytic set $A \subset \Rn$ is $(\bH^\calI,k)$ locally connected, recall \cite[3.11]{DEP.06.HOM}. Thus in that case, $\bH^\calI_k(A;G) \cong \check{H}_k(A;G) \cong H_k(A;G)$ where $H_k(A;G)$ denotes singular homology and the last equivalence holds because $A$ is triangulable. 
\end{Empty}

\begin{Empty}[Homology of Normal Currents]
\label{54}
We can repeat the argument made in \ref{53} with the usual normal currents. Letting
\begin{equation*}
\begin{split}
\bZ_k(A) &= \bN_k(A) \cap \left\{ T : \partial T = 0 \right\} \\
\bB_k(A) &= \bN_k(A) \cap \left\{ T : T = \partial S \text{ for some } S \in \bN_{k+1}(A) \right\}
\end{split}
\end{equation*}
and $\bH_k(A) = \bZ_k(A)/\bB_k(A)$ we note that $\bH_0(\{0\}) = \R$ and, referring to \cite[3.14]{DEP.06.HOM} again that $\bH_k(A) \cong \check{H}_k(A;\R) \cong H_k(A;\R)$ in case $A \subset \Rn$ is compact and subanalytic. 
\end{Empty}

\begin{Empty}[Cohomology of Charges]
\label{55}
A complex of cochains on  a compact subset $A \subset \Rn$ is defined and studied in \cite{DEP.HAR.PFE.17}. A {\em charge of degree $k$ on $A$} is a linear functional $\alpha : \bN_k(A) \to \R$ with the following property. For every $\veps > 0$ there exists $\theta > 0$ such that $|\alpha(T)| \leq \theta \bF(T) + \veps \bN(T)$ where $\bF(T)$ is the flat norm of $T$ and $\bN(T)=\bM(T)+\bM(\partial T)$. According to the main result of \cite{DEP.MOO.PFE.08}, {\it a linear functional $\alpha : \bN_k \to \R$ is a charge of degree $k$ if and only if there exist continuous forms $\omega \in C(\Rn,\bigwedge_k\Rn)$ and $\zeta \in C(\Rn,\bigwedge_{k-1}\Rn)$ such that
\begin{equation*}
\alpha(S) = \int_{\Rn} \la \omega, \vec{S} \ra d\|S\| + \int_{\Rn} \la \zeta , \vec{\partial S} \ra d\|\partial S\|
\end{equation*}
whenever} $S \in \bN_k(A)$, in other words $\alpha = \omega + d \zeta$. Let $\bCH^k(A)$ denote the vector space of charges of degree $k$ in $A$. The notions of cocycle and coboundary for charges in $A$ are readily defined in terms of their exterior derivatives $d = \partial^*$:
\begin{equation*}
\begin{split}
\bZ^k(A) &= \bCH^k(A) \cap \left\{ \alpha : d\alpha = 0\right\} \\
\bB^k(A) &= \bCH^k(A) \cap \left\{ d\beta : \beta \in \bCH^{k-1}(A) \right\} \,.
\end{split}
\end{equation*}
This in turn yields a cohomology space $\bH^k(A) = \bZ^k(A)/\bB^k(A)$. Furthermore $\bCH^k(A)$ is given a structure of Banach space with the norm
\begin{equation*}
\|\alpha\| = \sup \left\{ \alpha(S) : S \in \bN_k(A) \text{ and } \bN(S) \leq 1 \right\} \,.
\end{equation*}
\par 
The relevance of the linear isoperimetric inequality \ref{51} in this context is as follows. 
We recall \cite[Chapter 14]{DEP.HAR.PFE.17} that the compact set $A \subset \Rn$ is called {\em $q$-bounded}, $q=0,1,2,\ldots$, whenever the following holds: There exists $\bc(A,q) > 0$ such that for every $T \in \bB_q(A)$ there exists $S \in \bN_{q+1}(A)$ with $\partial S = T$ and $\bM(S) \leq \bc(A,q) \bM(T)$.
It follows from \cite[14.4 and 13.10]{DEP.HAR.PFE.17} that  $A$ is $q$-bounded if and only $\bB^q(A)$ is closed in $\bCH^q(A)$.
In that case $\bH^q(A)$ is a Banach space.
Furthermore, according to \cite[14.9]{DEP.HAR.PFE.17} $\bH^q(A)$ is the strong dual of $\bH_q(A)$ equipped with an appropriate locally convex vector topology, \cite[Chapter 12]{DEP.HAR.PFE.17}. From \S \ref{51} it readily follows that:
\par 
\vskip.2cm
{\it If $A \subset \Rn$ is compact and subanalytic, then  $A$ is $q$-bounded for all $q=0,1,2,\ldots$ and
 $\bH^q(A) \cong \bH_q(A)^* \cong H^q(A;\R)$, the singular cohomology with real coefficients.}
\vskip.2cm
\noindent The latter may be interpreted as a de Rham Theorem in this context.
\end{Empty}

\begin{Empty}[Plateau problem in a homology classes]
\label{56}
Let $G$ be a complete normed Abelian group with norm $|\cdot|$ and assume it satisfies the following extra two conditions.
\begin{enumerate}
\item[(A)] $G \cap \{ g : |g| \leq \kappa \}$ is compact for every $\kappa > 0$;
\item[(B)] $G$ is a \textsc{White} group, i.e. $G$ contains no nonconstant curve of finite length.
\end{enumerate}
We also let $A \subset \Rn$ be a compact subanalytic set, and $k=1,2,\ldots$. Recall the notations of \ref{52}.
Given $T_0 \in \calR_k(A;G)$ with $\partial T_0 = 0$, the following variational problem admits a minimizer:
\begin{equation*}
(\calP) \begin{cases}
\text{minimize } \bM(T)\\
\text{among } T \in \calR_k(A;G) \text{ with } T - T_0 = \partial S \text{ for some } S \in \calR_{k+1}(A;G).
\end{cases}
\end{equation*}
As we show below this is a consequence of our linear isoperimetric inequality and of work of \textsc{B. White}.

\begin{proof}[Existence of solution]
Let $\la T_j \ra_j$ be a minimizing sequence of $(\calP)$. 
For each $j$ there exists $S_j \in \calR_{k+1}(A;G)$ such that $T_j-T_0=\partial S_j$. 
According to the linear isoperimetric inequality \ref{52}, we may choose $\hat{S}_j \in \calI_{k+1}(A;G)$ so that $\partial\hat{S}_j = \partial S_j$ and $\bM(\hat{S}_j)\leq \bc(A)\bM(\partial\hat{S}_j)$.
Thus $T_j-T_0=\partial \hat{S}_j$ and 
$$\
\bM(\hat{S}_j)\ \leq\ \bc(A)\bM(\partial \hat{S}_j ) =\bc(A)\bM(T_j-T_0)\ \leq \bc(A)[ 1+\inf(\calP)+\bM(T_0)]\ <\ \infty\ .
$$
for $j$ sufficiently large.
Since $A$ is compact, the deformation theorem \cite{WHI.99.deformation} together with condition (A) above imply that both sets $\{T_j:j=1,2,\ldots\}$ and $\{\hat{S_j}:j=1,2,\ldots\}$ are totally bounded in the flat norm $\calF$.
Consequently there are integers $j_1 < j_2 < \ldots$ and flat $G$ chains $T \in \calF_k(A;G)$ and $\hat{S} \in \calF_{k+1}(A;G)$ such that $\ \lim_i \calF(T-T_{j_i})=0=\lim_i \calF(\hat{S} - \hat{S}_{j_i})$.
Note that 
$$
T-T_0\ =\ \lim_{i\to\infty}(T_{j_i}-T_0)\ =\ \lim_{i\to\infty}\partial \hat{S}_{j_i}\ =\ \partial \hat{S}\ .
$$
Inasmuch as $\bM$ is lower semicontinuous with respect to $\calF$ convergence, one infers that $\bM(T) < \infty$ and $\bM(\hat{S}) < \infty$. 
It therefore follows from \cite{WHI.99.rectifiability} and condition (B) above that $T \in \calR_k(A;G)$ and $\hat{S} \in \calR_{k+1}(A;G)$.
\end{proof}
\end{Empty}

\begin{Empty}[A linear relative isoperimetric inequality]
\label{57}
Here we work with a pair $B\subset A$ of compact subanalytic subsets of $\Rn$ and verify an isoperimetric inequality generalizing our Main Theorem.  
\par
Assuming that  $U := \Rn\setminus B$ and that   $S$ is a chain of finite mass with support in $A$, we will be interested in the {\bf mass of $S$ in} $U$,
$$
\bM( S\hel U)\ =\ \| S\| (U)\ =\ \| S\| (A\setminus B)\ =\ \bM[S\hel(A\setminus B)\, ]\ ,
$$
rather than the total mass $\bM(S)$. For integral currents, our relative version is:
\par 
\begin{Theorem*} There is a constant $\bc(A,B) > 0$ so that, for every $k\in\{1,2,\ldots\}$ and every $S_0 \in \bI_{k}(A)$, there exists an $S \in \bI_{k}(A)$ satisfying
$$
(\partial S)\hel U\ =\ (\partial S_0)\hel U\quad{\rm and}\quad\bM(S\hel U )\ \leq\ \bc(A,B) \bM[(\partial S)\hel U]\  ,
$$   
\end{Theorem*}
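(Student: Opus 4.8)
The plan is to deduce this relative statement from the (absolute) Main Theorem applied to a compact subanalytic \emph{doubling} of $A$ along $B$. Writing $\rho(x)=\rmdist(x,B)$, a $1$-Lipschitz subanalytic function, set
$$
\widehat A\ :=\ \bigl(A\times\{0\}\bigr)\ \cup\ \bigl\{\,(x,\rho(x))\ :\ x\in A\,\bigr\}\ \subset\ \Rn\times\R\ ,
$$
a compact subanalytic set (the topological double of $A$ along $B$) whose two sheets $\widehat A^{-}:=A\times\{0\}$ and $\widehat A^{+}:=\{(x,\rho(x)):x\in A\}$ are glued along $\widehat B:=B\times\{0\}=\widehat A^{-}\cap\widehat A^{+}$. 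The inclusion $\iota:A\to\widehat A^{-}$, $\iota(x)=(x,0)$, is an isometric embedding; the graph map $\Phi:A\to\widehat A^{+}$, $\Phi(x)=(x,\rho(x))$, is bilipschitz (its inverse being a coordinate projection); and crucially $\iota$ and $\Phi$ \emph{agree on} $B$. Moreover $\widehat A^{-}\setminus\widehat B=\widehat A\setminus\widehat A^{+}$ is open in $\widehat A$ and is carried by $\iota$ onto $A\cap U$.

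First I would form the antisymmetrized lift $\widehat S_0:=\iota_{\#}S_0-\Phi_{\#}S_0\in\bI_k(\widehat A)$ and compute its boundary. Since $\iota$ and $\Phi$ coincide on $B$, the currents $\iota_{\#}\partial S_0$ and $\Phi_{\#}\partial S_0$ have the \emph{same} restriction to $\widehat B$, so these parts cancel and
$$
\partial\widehat S_0\ =\ \iota_{\#}\bigl[(\partial S_0)\hel U\bigr]\ -\ \Phi_{\#}\bigl[(\partial S_0)\hel U\bigr]\ ,
$$
a sum of two integral currents with disjoint supports, one on each open sheet $\widehat A^{\pm}\setminus\widehat B$. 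In particular $\partial\widehat S_0$ is integral and
$$
\bM(\partial\widehat S_0)\ \leq\ \bigl(1+(\rmLip\Phi)^{k-1}\bigr)\,\bM\bigl[(\partial S_0)\hel U\bigr]\ .
$$
This is the decisive point: the part of $\partial S_0$ sitting inside $B$ — whose mass is in no way controlled by $\bM[(\partial S_0)\hel U]$ — is made to vanish by cancellation between the two sheets.

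Next I would apply the Main Theorem to $\widehat A$ in dimension $k$ (i.e. with its ``$k+1$'' taken to be our $k$), obtaining $\widehat S\in\bI_k(\widehat A)$ with $\partial\widehat S=\partial\widehat S_0$ and $\bM(\widehat S)\leq\bc(\widehat A)\,\bM(\partial\widehat S_0)$, and then extract from $\widehat S$ a current supported in $A$. Fix a Lipschitz $\mu:\widehat A\to\R$ strictly negative on $\widehat A^{-}\setminus\widehat B$ and vanishing on $\widehat A^{+}$, e.g. $\mu(x,s)=s-\rho(x)$. For almost every $c<0$ the slice $\langle\widehat S,\mu,c\rangle$ is integral and $T_c:=\widehat S\hel\{\mu<c\}$ is an integral current supported in $\widehat A\cap\{\mu<c\}\subset\widehat A^{-}\setminus\widehat B$, hence an element of $\bI_k(A)$ after identification via $\iota$; and since the term $\Phi_{\#}[(\partial S_0)\hel U]$ of $\partial\widehat S$ lives in $\{\mu=0\}$,
$$
\partial T_c\ =\ \bigl[(\partial S_0)\hel U\bigr]\hel\{\mu<c\}\ +\ \langle\widehat S,\mu,c\rangle\ .
$$
Choosing $c_j\uparrow 0$ along which $\bM\langle\widehat S,\mu,c_j\rangle\to 0$ (possible because $\int\bM\langle\widehat S,\mu,c\rangle\,dc\leq\rmLip(\mu)\,\bM(\widehat S)$), one gets $\bM\bigl(\partial T_{c_j}-(\partial S_0)\hel U\bigr)\to 0$ while $\bM(T_{c_j})\leq\bM(\widehat S)$ stays bounded; the Federer--Fleming compactness theorem (all $T_{c_j}$ lie in the fixed compact $\widehat A^{-}$) then yields a flat limit $S\in\bI_k(A)$ with $\partial S=(\partial S_0)\hel U$. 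Hence $(\partial S)\hel U=(\partial S_0)\hel U$, and by lower semicontinuity of mass
$$
\bM(S\hel U)\ \leq\ \bM(S)\ \leq\ \bM(\widehat S)\ \leq\ \bc(\widehat A)\bigl(1+(\rmLip\Phi)^{k-1}\bigr)\,\bM\bigl[(\partial S_0)\hel U\bigr]\ ;
$$
one takes $\bc(A,B)$ to be the largest of these (finitely many, as only $k\leq\rmdim A$ are nonvacuous) constants.

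The hard part here is conceptual rather than computational. The seemingly natural reduction — cone $B$ off to a point in $\R^{n+1}$ and invoke the Main Theorem — breaks down precisely because $\partial\bigl[(\partial S_0)\hel U\bigr]$ is supported in $B$ and may carry arbitrarily large mass even when $(\partial S_0)\hel U$ is small, so there is no budget to cone it to the vertex; the doubling, which reflects $S_0$ onto a second sheet, is what causes this uncontrolled mass to cancel. The only real technicality is that $\widehat S\hel\widehat A^{-}$ need not be an integral current, which is why $S$ is obtained as a limit of good slices rather than by a single restriction.
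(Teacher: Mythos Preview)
Your doubling construction is elegant and the key cancellation is correct: since $\iota$ and $\Phi$ agree on $B$, the piece $(\partial S_0)\hel B$ really does disappear from $\partial\widehat S_0$, and the Main Theorem applied to $\widehat A$ yields $\widehat S\in\bI_k(\widehat A)$ with $\bM(\widehat S)$ controlled by $\bM[(\partial S_0)\hel U]$. The approach is genuinely different from the paper's, which instead re-runs the entire inductive proof of the absolute theorem, now requiring the local Lipschitz contractions $h_i$ and the link bilipschitz equivalences to \emph{preserve} $B$, and replacing every mass estimate by a mass-in-$U$ estimate.

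The gap is in your extraction step. You claim that, because $\int\bM\langle\widehat S,\mu,c\rangle\,dc\leq\rmLip(\mu)\,\bM(\widehat S)<\infty$, one can choose $c_j\uparrow 0$ with $\bM\langle\widehat S,\mu,c_j\rangle\to 0$. This inference is false: integrability of $g(c):=\bM\langle\widehat S,\mu,c\rangle$ on $(-1,0)$ does not force $\liminf_{c\to 0^-}g(c)=0$, nor even $\liminf g(c)<\infty$. The function $g(c)=(-c)^{-1/2}$ is a legitimate $L^1$ example, and it is realizable as the slice mass of an integral current (e.g.\ a finite-length curve in $\widehat A^-$ that zig-zags with roughly $(-c)^{-1/2}$ crossings of the level $\{\mu=c\}$ as $c\to 0^-$). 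Nothing in the conclusion of the Main Theorem prevents $\widehat S$ from having such behavior near $\widehat B$.

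Without that control you lose both pillars of the last paragraph: you cannot bound $\bM(\partial T_{c_j})$, so Federer--Fleming compactness does not apply, and you cannot conclude $\bM\bigl(\partial T_{c_j}-(\partial S_0)\hel U\bigr)\to 0$. One can still show that $T_c\to T_0:=\widehat S\hel\{\mu<0\}$ in \emph{mass} and that $\partial T_0-(\partial S_0)\hel U$ is a flat chain supported in $B$; but there is no reason for this flat chain to have finite mass, so boundary rectifiability does not upgrade $T_0$ to an element of $\bI_k(A)$. Pushing forward by $\pi$ or by the sheet-swap $\sigma$ does not help either, since $\pi_\#\widehat S$ is a cycle and antisymmetrization over $\Z$ does not let you halve. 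The paper's proof avoids this obstacle precisely because it never performs a hard restriction to an open half at the end; the mass in $U$ is tracked through every local construction.
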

\begin{proof}
Note that this does not follow by simply applying the statement of the Main Theorem to $S\hel U$ because the righthand side of the inequality  
 is missing the term $\bM [\partial (S\hel U)]- \bM [(\partial S)\hel U]$.  Nevertheless, we can obtain the desired $S$ by slightly modifying the arguments in \S 3 and \S  4. Of course, the various constructions and choices, as well as the final chain $S$, will now depend on  $B$ (and $U=\Rn\setminus B$) as well as on $A$ and $S_0$.
 \par
For the new relative version of Theorem \ref{thm.1}, we simply require the extra statement that 
 \vskip.1cm
     {\it if $a\in B$, then {\bf the Lipschitz contraction $h$ preserves} $B\cap K$, that is,} 
 $$
 h\left( [0,1]\times (B\cap K)\right)\ =\ B\cap K\ .
 $$
This extra property has already been essentially treated  in $(A_n)(1)$ of the proof of \cite[Th.2.3.1]{VAL.12}. There, the inductive argument, in \textsc{G. Valette}'s notation, gives, for any finite collection $X_1,X_2,\dots,X_s$ of compact subanalytic subsets of $\Rn$, a single Lipschitz neighborhood contraction $r$ of a neighborhood $U_\varepsilon$ of $x_0$ whose restrictions simultaneously contract the $U_\varepsilon \cap X_j$ . So here we are simply using the two sets $X_1=A$, $X_2=B$  to get the desired contractions $h=r$ of $K=X_1\cap \rmClos ( U_\varepsilon)$ to $a=x_0$ that preserves $B\cap K$.
\par
In the new relative version of Corollary \ref{cor.1}, the fact that $h$ preserves both $K$ and $B\cap K$ implies that $h$ preserves $U\cap K=K\setminus B$, that is, $\ h([0,1)\times (U\cap K))\ =\ U\cap K$. Thus we obtain, by applying $\hel U$ in the proof of Corollary \ref{cor.1}, that
$$
H\hel U\ =\ -h_\#\left( [0,1]\times J\right)\hel U\ =\ - h_\#\left( [0,1]\times ( J\hel U)\ \right)\ ,
$$
which gives the additional conclusion 
\begin{equation}
\label{eq.6}
\bM (H\hel U)  \leq ( \rmLip h)^{k+1}{\bM}(J\hel U)\ , 
\end{equation}                                                                                                                                                                                                                                                                                                                                                                                                  which is essentially the local case of our linear relative isoperimetric inequality.   

If the point  $a\not\in B$ and $K$ is small enough so that $K\cap B=\emptyset$, we can still use the contraction $h$ as before and inequality (\ref{eq.6}) remains true because $J\hel U= J$ and $H\hel U=H$.
 \par
For the new version of Theorem \ref{thm.2}, we need the  extra property that the resulting bilipschitz equivalence of $A$ links also preserves the $B$ links, ie. for $\vec\lambda,\vec\mu$ in $R_{\vec r}^{\vec a}$, 
$$
B\cap \rmBdry \bU(a_i,\lambda r_i)\ =\ B\cap L_{\lambda r_i}^{a_i}\ \to\ B\cap \rmBdry \bU(a_i,\mu r_i)\ =\ B\cap L_{\mu r_i}^{a_i}\ .
$$ 
Our present proof will give this property if we simply add the requirement  that our stratification $\calS$ also be compatible with the set  $\R\times B$. 
\par
We now repeat all the constructions of \S 4, look at the restrictions to $U$, and estimate the masses in $U$ in terms of ${\bM}[(\partial S_0)\hel U]$ rather than ${\bM}(\partial S_0)$. In these estimates  we will use the symbol $\bc$ to abbreviate a constant $\bc(A,B)$, depending only on $A$ and $B$. 

When we first employ compactness to find the finite collection of balls ${\bU}(a_i,r_i/2)$ covering $A$, we may choose just from those balls ${\bU}(a,r/2)$ where the center $a\in A$ and {\bf either $a\in B$ or} ${\bB}(a,r)\cap B=\emptyset$.  Thus, when we repeat the constructions  with the resulting $U_i^{\lambda_i}=\bU(a_i,\lambda_ir_i)$, we can use either the new or old version of Theorem \ref{thm.2}, and Corollary \ref{cor.1} depending on whether $a_i\in B$ or $a_i\not\in B$.

We again argue by induction on $\rmdim A $. As before, the bilipschitz equivalence discussion gives us the assumption  (*)  on the inductive validity of the new relative theorem with the same constant when $A,B$ is replaced by every pair $A\cap\Gamma_i^{\vec\lambda},\ B\cap\Gamma_i^{\vec\lambda}$.
\par
For $\lambda\in\bbR\setminus\Lambda_i^S$,  the slices of $S$ and $\partial S$ by $u_i$ are given by integration, and so they  may be restricted to the open set $U$. That is, these slices all commute with the operation $\hel U$. In particular, simply applying $\hel U$ to every term in (\ref{eq.2}), (\ref{eq.3}), and  (\ref{eq.5}) results in  
 \begin{equation*}
\begin{split}
\langle  S\hel U,u_i,\lambda \rangle\   =\  \langle  S,u_i,\lambda \rangle\hel U\   &=\  [\partial (S\hel U_i^\lambda)]\hel U\ - (\partial S)\hel ( U_i^\lambda \cap U )\  ,\\   \langle  (\partial S)\hel U,u_i,\lambda \rangle\   =\ \langle  \partial S,u_i,\lambda \rangle\hel U\   &=\  \partial [ (\partial S)\hel U_i^\lambda ]\hel U\  ,\\
\int_{\lambda_{\vec r, i}^{\vec a}}^{\mu_{\vec r, i}^{\vec a}} \bM(\langle S, u_i, \lambda\rangle\hel U)\, d\lambda\ \ &\leq\ r_i^{-1}\bM( S\hel U)\\
\int_{\lambda_{\vec r, i}^{\vec a}}^{\mu_{\vec r, i}^{\vec a}} \bM(\langle \partial S, u_i, \lambda\rangle \hel U)\, d\lambda \ &\leq\ r_i^{-1}\bM( (\partial S)\hel U)\ .\\
\langle  Q\hel U,u_i,\lambda \rangle\   =\ \langle  Q,u_i,\lambda \rangle\hel U\   &=\  [\partial (Q\hel U_i^\lambda)]\hel U\ - (\partial Q)\hel ( U_i^\lambda \cap U )\  ,
\end{split}
\end{equation*}

\noindent In the $i$th step of the new proof, we find $\lambda_i$ so that the slice at  
$R_i\hel U\ :=\   \partial \langle S_{i-1},u_i,\lambda_i\rangle\hel U$, has mass
\begin{equation*}
 \bM(R_i\hel U)\  \leq\ \bc\bM[(\partial S_{i-1})\hel U]\ \leq\ \bc\bM[(\partial S_0)\hel U]\ .
\end{equation*}
Now the induction on dimension allows us to to replace $P_i= \langle S_{i-1},u_i,\lambda_i\rangle$ by a chain ${Q_i}\in\bI_{k-1}(A\cap \Gamma_i^{\vec\lambda})$ such that $\partial  Q_i = \partial  P_i = R_i$ and
$$
\bM(Q_i\hel U)\ \leq\ \bc\bM[(\partial Q_i)\hel U]\ =\  \bc \bM (R_i\hel U)\ \leq\ \bc\bM[(\partial S_0)\hel U]\ . 
$$
By defining $Q_{\eta,h}$ and  $H_i$ exactly as before, based on  $Q_1,\dots , Q_i$, $S_0$, and contractions $h_i$, we now find mass-in-$U$ estimates
$$
\bM (H_i\hel U) +  \bM[(\partial H_i)\hel U]\ \leq\  \bc \left( \bM[(\partial S_0)\hel U]\ +\ \sum_{h=1}^{i} 
 \bM(Q_h\hel U) \right)\ \leq\ \bc\bM[(\partial S_0)\hel U]\ ,
$$
since the $h_i$ preserves $U\cap K_i$. With the last modification $H_\ell$ also defined exactly as before, we similarly verify $\bM (H_i\hel U)\leq \bc\bM[(\partial S_0)\hel U]\ $. It follows, as before, that $S=H_1+\cdots +H_\ell$ satisfies the relative theorem.
\end{proof}
\end{Empty}

\begin{Empty}[Remark]
\label{58}
Again both the statement and proof of Theorem \ref{57} carry over to chains with coefficients in a complete normed abelian group $G$. The application below in \S \ref{59} generalizes \S \ref{56} and so involves the rectifiability and the group assumptions of \S \ref{56}(A)(B). But  we will no longer be assuming that all chains and their boundaries have finite mass or are rectifiable everywhere. Noting that the statement and proof of Theorem \ref{57} involve the behavior of the chains and their masses only in $U$, we see that we can further generalize Theorem \ref{57} to flat chains $T$ where both $T$ and $\partial T$ are rectifiable with finite {\bf mass in} $U$. Here a chain $T\in\calF_k(A;G)$ is {\bf rectifiable with finite mass in} $U$ provided that 
$$
T\hel U_i\in\calR_k(A;G)\quad  {\rm and}\quad \sup_i\bM(T\hel U_i) < \infty\ ,
$$
for some open sets $U_1\subset U_2 \subset \cdots $ with $\cup_{i=1}^\infty U_i = U$. In this case\  $\la T\hel U_i\ra_i $\  is $\bM$ Cauchy, and we let $T\hel U$ denote the $\bM$ limit.  This limit is well-defined independently of the choice of open sets, is rectifiable, and  coincides with the usual definition of $T\hel U$ in case  $\bM(T)<\infty$. Also one checks  that
$\rmspt (T-T\hel U) \subset B$ \cite[\S5.5]{DEP.HAR.07}.
\end{Empty}

\begin{Empty}[A relative homology Plateau problem]
\label{59}
  First recall that, for rectifiable chains, the  groups of relative cycles, relative boundaries, and relative homology can be defined:
\begin{equation*}
\begin{split}
\bZ_k^\calR(A,B;G) &= \calR_k(A;G) \cap \left\{ T : {\rmspt}(\partial T) \subset B \right\} \\
\bB_k^\calR(A,B;G) &= \calR_k(A;G) \cap \left\{ T : {\rmspt}(T - \partial S)\subset B\text{ for some } S \in \calR_{k+1}(A;G) \right\} \\
\bH^\calR_k(A,B;G) &= \bZ_k^\calR(A,B;G)/\bB_k^\calR(A,B;G)\ .
\end{split}
\end{equation*}
\par
Here we  discuss how the relative isoperimetric inequality of \S \ref{57} is useful for the following relative homology Plateau problem that  generalizes 
\S \ref{56}.

\hskip.2cm
Given $T_0 \in \calR_k(A;G)$ where $B\subset A$ are compact subanalytic subsets of $\Rn$ and  $G$ satisfies \S \ref{56}(A)(B), consider the problem:
\begin{equation*}
(\calP_B) \begin{cases}
\text{minimize } \bM(T)\\
\text{among } T \in \calR_k(A;G) \text{ with } \rm{spt}(T - T_0-\partial S)\subset B  \text{ for some } S \in \calR_{k+1}(A;G).
\end{cases}
\end{equation*}
\par
Note that in case $\rmspt (\partial T_0)\subset B$, i.e. $T_0\in\bZ_k^\calR(A,B;G)$,  one is minimizing mass in the relative homology class
$$
[T_0]\ := \ \bZ_k^\calR(A,B;G)\cap\left\{ T\ :\ T-T_0\in \bB_k^\calR(A,B;G) \right\}\ \in\ \bH^\calR_k(A,B;G)\ .
$$
\begin{proof}[Existence of solution] 
Since $T_0$ is admissible, one easily obtains a mass minimizing sequence $\la T_j \ra_j$ in $\calR_k(A;G)$ and  $\la S_j\ra_j$ in $\calR_{k+1}(A;G)$ with $\rmspt (T_j-T_0-\partial S_j)\subset B$.  The sequence $\bM(T_j)$ has a finite upper bound which we may assume is $\bM(T_0)$.   The chain $T\hel U$ is admissible whenever $T$ is because $\rmspt (T-T\hel U)\subset B$. Also  we may assume $T_0=T_0\hel U$ because  $T_0\hel U$ gives the same admissible class as $T_0$ does .

Even though  the sequence $\bM(\partial T_j)$ is not obviously bounded above,  the equation $(\partial T_j)\hel U=(\partial T_0)\hel U$ gives a bound on the mass in $U$ of $\partial T_j$.  While there is no bound for the mass in $U$ of $S_j$,  the equation  $(\partial S_j)\hel U=T_j\hel U\ -\ T_0\hel U$ shows that   $\partial S_j$ has bounded  mass in $U$ and is rectifiable in $U$. Just like in \S \ref{56}, we can now use Theorem \ref{57}, with $S_0=S_j$, and Remark \ref{58} to replace  $S_j$ by another chain $\hat{S}_j $, with the same boundary in $U$, to assure that the sequence $\hat{S}_j$ has  bounded mass in $U$. Specifically,
$$
\bM(\hat{S}_j\hel U)\leq \bc\bM((\partial S_j)\hel U)=\bc\bM[(T_j-T_0)\hel U]\leq 2\bc\bM(T_0)\  .
$$
To construct the desired rectifiable chains $S\in\calR_{k+1}(A;G)$ and $T\in\calR_{k}(A;G)$ so that $\rm{spt}(T - T_0-\partial S)\subset B$ and $T$ is a mass minimizer for $(\calP_B) $, we will take limits inside of $U$ away from $B$ and then use a diagonal argument.  Accordingly, we define, for $\delta>0$,  $U_\delta :=\ \{ x\in U\  :\ u(x)>\delta\}$ where $u(x)=\rm{dist}(x,B)$. Inasmuch as 
\begin{equation*}
\begin{split}
\int_0^\infty\liminf_{j\to\infty}&[ \bM\la {\hat S}_j,u,t\ra + \bM\la T_j,u,t\ra ]\,dt\
\leq\ \liminf_{j\to\infty}\int_0^\infty[ \bM\la {\hat S}_j,u,t\ra + \bM\la T_j,u,t\ra ]\,dt\\
&\leq\ \sup_j\bM({\hat S}_j\hel U)+\sup_j\bM(T_j\hel U)\ \leq\ (2\bc+1)\bM(T_0)\ <\ \infty\ ,
\end{split}
\end{equation*} 
we can choose a sequence $t_i\downarrow 0$  so that, for all  $i$,  
$$
\liminf_{j\to\infty}[ \bM\la {\hat S}_j,u,t_i\ra + \bM\la T_j,u,t_i \ra ]\ <\ \infty\ ,\quad \bM\la {T}_0,u,t_i \ra\  <\ \infty,\quad \bM\la \partial T_0,u,t_i\ra\ <\ \infty\ .
$$
We can also insist that, for all $i$ and $j$, 
\begin{equation*}
\begin{split}
  \la {\hat S}_j,u,t_i \ra\in \calR_k(A;G)\ ,&\quad \la T_j,u,t_i\ra \in \calR_{k-1}(A;G)\ ,\\
\partial ({\hat S}_j\hel U_{t_i} )  = ( \partial {\hat S}_j)\hel U_{t_i} + \la {\hat S}_j,u,t_i\ra\ ,&\quad \partial ( {T}_j\hel U_{t_i} )  = ( \partial {T}_j)\hel U_{t_i} + \la {T}_j,u,t_i\ra\  .
\end{split}
\end{equation*} 
Inasmuch as  $(\partial {\hat S}_j)\hel U=(T_j-T_0)\hel U$ and $(\partial T_j)\hel U=(\partial T_0)\hel U$, we also have, for every $i=1,2,\dots$, that
\begin{equation*}
\begin{split}
&\liminf_{j\to\infty}\left[ \bM({\hat S}_j\hel U_{t_i} ) + \bM \partial ({\hat S}_j \hel U_{t_i} ) +\bM(T_j\hel U_{t_i})+\bM\partial (T_j\hel U_{t_i}) \right]\\
&\leq \sup_j \left( \bM({\hat S}_j\hel U) + \bM[(\partial {\hat S}_j)\hel U] +   \bM(T_j\hel U) + \bM[(\partial T_j)\hel U ]\right)\\
&\hskip2in +\ \liminf_{j\to\infty}\left[ \bM\la {\hat S}_j,u,t_i\ra + \bM\la T_j,u,t_i \ra \right]\\
&\leq  (2\bc +2+1)\bM(T_0) + \bM[(\partial T_0)\hel U] +\ \liminf_{j\to\infty}\left[ \bM\la {\hat S}_j,u,t_i\ra + \bM\la T_j,u,t_i \ra \right]\ <\ \infty\ .
\end{split}
\end{equation*}

We find a subsequence $\ \la i^{(1)}_j\ra_j \ $ of $\ \la j\ra_j $ giving, as $j\to\infty$,  flat convergences 
$$ 
{\hat S}_{i^{(1)}_j }\hel U_{t_1}\to {\hat S}^{(1)} \in\calR_{k+1}(A;G)\quad {\rm and}\quad  T_{i^{(1)}_j }\hel U_{t_1}\to T^{(1)}\in\calR_k(A;G)\  .
$$ 
For $m=2,3,\dots$, we similarly inductively find subsequences $\ \la i^{(m)}_j\ra_j\ $ of $\ \la i^{(m-1)}_j\ra_j\ $ and, 

\noindent as $j\to\infty$,  flat convergences 
$$
{\hat S}_{i^{(m)}_j }\hel U_{t_m}\to {\hat S}^{(m)} \in\calR_{k+1}(A;G)\quad {\rm and}\quad \ T_{i^{(m)}_j }\hel U_{t_m}\to T^{(m)}\in\calR_k(U;G) \ .
$$
The lower semicontinuity of $\bM$ implies that $\bM({\hat S}^{(m)})\leq 2\bc\bM(T_0)$ and $\bM({\hat T}^{(m)})\leq \bM(T_0)$.  
\par
For $\ell < m$, $U_{t_\ell}\subset U_{t_m}$, and one has ${\hat S}^{(\ell)}= {\hat S}^{(m)}\hel U_{t_\ell}$ and ${\hat T}^{(\ell)}= {\hat T}^{(m)}\hel U_{t_\ell}$. It follows that the sequences $ {\hat S}^{(m)}$ and ${\hat T}^{(m)}$ are $\bM$-Cauchy and $\bM$-convergent to chains 
$S\in\calR_{k+1}(A;G)$ and  $T\in\calR_{k}(A;G)$, respectively, characterized by having $S\hel U_{t_m}={\hat S}^{(m)}$ and $T\hel U_{t_m}=T^{(m)}$ for every $m\in\{1,2,\dots\}$. Taking the diagonal subsequence $\ \la j'\ra_j\ =\ \la i^{(j)}_j\ra_j\ $, one now has, for all $m$, the flat convergences 
$$
\lim_{j\to\infty}{\hat S}_{j'}\hel U_{t_m}={\hat S}^{(m)}=S\hel U_{t_m}\quad{\rm and} \quad\lim_{j\to\infty} T_{j'}\hel U_{t_m}={\hat T}^{(m)}=T\hel U_{t_m}\  .
$$ 
\par
To verify the boundary relation that $R:= T-T_0-\partial S$ has support in $B$,  it suffices to show that $R\hel U_t = 0$ for a.e. $t>0$.  For each $m\in\{1,2,\dots\}$, a.e. $t\in [t_{m+1},t_m]$, and $j'> m$, we have that
\begin{equation*}
\begin{split}
R\hel U_t\ &=\ (T-T_0-\partial S)\hel U_t\\ 
&=\ (T_{j'}-T_0-\partial {\hat S}_{j'})\hel U_t+(T-T_{j'})\hel U_t-(\partial S-\partial {\hat S}_{j'})\hel U_t\ .
\end{split}
\end{equation*}
Taking the flat norm, integrating, and applying \cite{DEP.HAR.07}[Thm.5.2.3(2)] gives
\begin{equation*}
\begin{split}
\int_{t_{m+1}}^{t_m}\calF( R\hel U_t)\,dt\ &=\  \int_{t_{m+1}}^{t_m}\calF[(T-T_0-\partial S)\hel U_t]\,dt\\  
&\leq\ \int_{t_{m+1}}^{t_m}\left( 0 + \calF[(T-T_{j'})\hel U_t] + \calF [ (\partial S-\partial{\hat S}_{j'})\hel U_t)]\right)dt \\
&\leq\ (t_m-t_{m+1}+1)\left[\calF(T-T_{j'} ) + \calF \partial (S-{\hat S}_{j'}) \right]\\
&\leq\ (t_m-t_{m+1}+1)\left[\calF(T-T_{j'} ) +\calF(S-{\hat S}_{j'} ) \right]\ \to\ 0\ {\rm as}\ j\to\infty\ .
\end{split}
\end{equation*}
We conclude that $ (T-T_0-\partial S)\hel U_t =0$ for a.a. positive $t$, and $\rmspt (T-T_0-\partial S)\subset B$. 
\par
Since $T$ is thus admissible for $(\calP_B)$ and $\la T_j\ra_j$ is a mass minimizing sequence,

\noindent$\bM(T)\geq {\calM} :=\lim_{j\to\infty}\bM(T_j)$\ . \ \ To get that the reverse inequality, we may for any $\varepsilon > 0$, choose an $m$ sufficiently large so that $\bM(T^{(m)}) > \bM(T)-\varepsilon$. We may combine this with the mass lower semicontinuity, under the flat convergence of $T_{i^{(m)}_j}\hel U_{t_m}$ to $T^{(m)}$,
to deduce that 
$$
\bM(T)-\varepsilon\ <\ \bM(T^{(m)})\ \leq\ \liminf_{j\to\infty}\bM(T_{i^{(m)}_j}\hel U_{t_m})\ \leq\ \liminf_{j\to\infty}\bM(T_{i^{(m)}_j})\ =\ \calM\ ,
$$
Letting $\varepsilon\downarrow 0$ gives  $\ \bM(T)\leq\calM$, showing that $T$ is the desired mass minimizer for $(\calP_B) $.
\end{proof}
\end{Empty}

\begin{Empty}[A Poincar\'e inequality]
\label{60}
{\it Suppose the set $M$ of regular points of  a compact subanalytic subset  $A$ of dimension $k$ of $\Rn$ is connected and orientable.  There is a  constant $\bc(A)$ so that for any $f\in{\bf BV}(M)$ and any $m\in\bbR$ satisfying  $\ \mathscr{H}^k\{x\, :\, f(x) < m\} =\ \mathscr{H}^k\{x\, :\,  f(x) > m\}\ $,
(i.e. $m$ is a median of $f$\ ), one has the inequalities}
\begin{align*}
&(1)\quad \int_M |f - m|\, d{\mathscr H}^k\ \leq\ \bc(A) \int_M \| Df\|\  \quad {\rm and}\\
&(2)\quad \int_M |f - \overline f|\, d{\mathscr H}^k\ \leq\ \bc(A) \int_M \| Df\| \quad{\rm where}\quad \overline{f}\ =\ {\mathscr H}^k(A)^{-1}\int_M f\, d{\mathscr H}^k\ .
\end{align*}
\begin{proof} 
Since $A\setminus M$ is subanalytic of dimension $< k$, we may assume $\overline M = A$. For (1) we may subtract the constant function $m$ to assume that $m=0$. Thus  the two sets $M_- = \{ x\in M\, :\, f(x)< 0\} $ and $M_+= \{ x\in M\, :\, f(x)> 0\} $ have the same ${\mathscr H}^k$ measure, which is finite because $ {\mathscr H}^k(M)= {\mathscr H}^k(A) <\infty$. Fix an orientation for $M$, and let $\lseg M\rseg$ denote the corresponding $k$ dimensional rectifiable current. Here, the flat chain $\partial\lseg M\rseg$ is also rectifiable because $\rmspt \partial \lseg M\rseg\subset B:=\overline M\setminus M$ and the constancy theorem \cite[4.1.31]{GMT} may be applied to the $k-1$ dimensional strata. From \cite[4.5.9(12)]{GMT}, we see that for almost all $s>0$, the chain $\lseg M\rseg_s :=\lseg M\rseg\hel \{ x\in M\, :\, f(x)> s\}$ is rectifiable and of finite mass and finite boundary mass in $U:= \Rn\setminus B$. 

Applying the  linear relative isoperimetric inequality, Theorem \ref{57}, with $S_0= \lseg M\rseg_s$ we find an $S_s \in \bI_{k}(A)$ satisfying
$$
(\partial S_s)\hel M\ =\ (\partial \lseg M\rseg_s)\hel M\quad{\rm and}\quad\bM(S_s\hel M )\ \leq\ \bc(A) \bM[(\partial \lseg M\rseg_s\hel M]\  ,
$$
Inasmuch as $\partial (S_s-\lseg M\rseg_s)\hel M =0$, the constancy theorem gives an integer $j$ so that 
$$
(S_s-\lseg M\rseg_s)]\hel M\ = j\lseg M\rseg\quad {\rm and}\quad S_s\hel M = (j-1)\left(\lseg M\rseg\ - \lseg M\rseg_s\right) + j\lseg M\rseg_s\ .
$$
Since $\bM(\lseg M\rseg_s)\leq {\mathscr H}^k(M_+) = \frac 12{\mathscr H}^k(M)=\frac 12\bM(\lseg M\rseg)$, we deduce that
$$
\bM(\lseg M\rseg_s)\ \leq\ \bM(S_s\hel M )\ \leq\ \bc(A) \bM\left(\partial \lseg M\rseg_s\hel M\right)\  .
$$ 
We now use the BV coarea formula as in \cite[4.5.9(13)]{GMT} to see that
\begin{align*}
\int_{M_+} |f |\, d{\mathscr H}^k\ &=\ \int_0^\infty{\mathscr H}^k\{ f>s\}\,ds\ =\ \int_0^\infty \bM(\lseg M\rseg_s)\,ds\\ 
&\leq\ \bc(A)\int_0^\infty \bM\left(\partial \lseg M\rseg_s)\hel M\right)\ \,ds\ =\ \bc(A) \int_{M_+} \| Df\|\ .
\end{align*}
The same argument applied to $-f$ gives
$$
 \int_{M_-} |f |\, d{\mathscr H}^k\ \leq\ \bc(A) \int_{M_-} \| Df\|\ ,
$$
and the conclusion of (1)
\begin{align*}
\int_M |f |\, d{\mathscr H}^k\ &= \int_{M_-} |f |\, d{\mathscr H}^k  +\ \int_{M_+} |f |\, d{\mathscr H}^k\\
& \leq\ \bc(A) \int_{M_-} \| Df\|\ + \bc(A) \int_{M_+} \| Df\|\ \leq \bc(A) \int_M \| Df\|\ .
\end{align*}

Conclusion (2) easily follows from (1) because
$$
 \int_M |f - \overline f|\, d{\mathscr H}^k\ \leq\  \int_M |f - m|\, d{\mathscr H}^k\ +\  \int_M |m-\overline f |\, d{\mathscr H}^k\ ,
$$
and
$$
 |m-\overline f |\ =\ {\mathscr H}^k(A)^{-1}\big|\int_M (m\ -\ f)\, d{\mathscr H}^k\big|\ \leq\   {\mathscr H}^k(A)^{-1}\int_M |f - m|\, d{\mathscr H}^k\ ,
$$

\end{proof}
\end{Empty}

%=======================
% BIBLIOGRAPHY AND INDEX
%=======================

\bibliographystyle{amsplain}
%\bibliography{/home/thierry/Documents/LaTeX/Bibliography/thdp}
\bibliography{LIP-SA-submit2.bbl}
%\printindex

\end{document}